\def\be{\begin{equation}}
\def\ee{\end{equation}}
\def\beq{\begin{eqnarray}}
\def\eeq{\end{eqnarray}}
\def\beqs{\begin{eqnarray*}}
\def\eeqs{\end{eqnarray*}}
\def\ea{\end{array}}
\def\ea{\end{array}}
\def\ds{\displaystyle}
\def\RR{\mathbb{R}}
\def\11{{\rm 1~\hspace{-1.5ex}1} }
\def\NN{\mathbb{N}}
\def\CC{\mathbb{C}}
\newcommand{\rfb}[1]{\mbox{\rm
   (\ref{#1})}\ifx\undefined\stillediting\else:\fbox{$#1$}\fi}
\def\section{\@startsection {section}{1}{\z@}{-3.5ex plus -1ex minus
    -.2ex}{2.3ex plus .2ex}{\large\bf}}
\font\eufm=eufm10\font\eufms=eufm10\font\eufmss=eufm10\newfam\eufam
\newtheorem{theorem}{Theorem}[section]
\newtheorem{lemma}[theorem]{Lemma}
\newtheorem{remark}[theorem]{Remark}
\newtheorem{proposition}[theorem]{Proposition}
\begin{document}
\thispagestyle{empty}
\title[Dissipative Schr\"{o}dinger operator on the tadpole graph]{Spectral analysis and stabilization of the dissipative Schr\"{o}dinger operator on the tadpole graph}

\author{Ka\"{\i}s Ammari}
\address{UR Analysis and Control of PDEs, UR13E564, Department of Mathematics, Faculty of Sciences of
Monastir, University of Monastir, 5019 Monastir, Tunisia}
\email{kais.ammari@fsm.rnu.tn}

\author{Rachid Assel}
\address{UR Analysis and Control of PDEs, UR13E564, Department of Mathematics, Faculty of Sciences of
Monastir, University of Monastir, 5019 Monastir, Tunisia}
\email{rachid.assel@fsm.rnu.tn}

\date{}

\begin{abstract} 
We consider the damped Schr\"odinger semigroup $e^{-it \frac{d^2}{dx^2}}$
on the tadpole graph ${\mathcal R}$. We first give a careful spectral analysis and an appropriate decomposition of the kernel of the resolvent. As a consequence and by showing that the generalized eigenfunctions form a Riesz basis of some subspace of $L^2({\mathcal R})$, we prove that the corresponding energy decay exponentially. 
\end{abstract}

\subjclass[2010]{34B45, 47A60, 34L25, 35B20, 35B40}
\keywords{Spectral analysis, exponential stability, dissipative Schr\"odinger operator, tadpole graph}

\maketitle

\tableofcontents
\vfill\break

\section{Introduction} \label{formulare}

In the last few years various physical models of multi-link flexible structures consisting of finitely or infinitely many interconnected flexible elements such as strings, beams, plates, shells have been of great interest. See the references \cite{amregmer} as well as \cite{nojaetall:15,cacc,kost} and the references therein. The spectral analysis of such structures has some applications to control or stabilization problems (cf. \cite{ammarinicaise}).

\medskip

In this paper we give a careful spectral analysis of the dissipative Schr\"odinger operator on the tadpole graph
(sometimes also called lasso graph) and as application we study the corresponding stabilization problem:

\begin{center}\label{fig1} 
\includegraphics[scale=0.80]{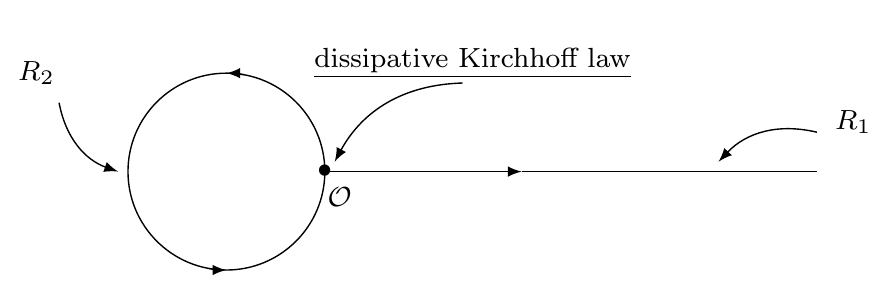}
\captionof{figure}{Tadpole graph}
\end{center}

Before a precise statement of our main result,
let us introduce some notations which will be used throughout the
rest of the paper.

\medskip

Let $R_i,i=1,2,$ be two disjoint sets
identified with a closed path of measure equal to $L > 0$ for $R_2$ and to $(0,+\infty),$
for $R_1$, see Figure \ref{fig1}. We set ${\mathcal R} := \ds \cup_{k=1}^2
\overline{R}_k$. We denote by $f = (f_k)_{k=1,2} =
(f_1,f_2)$ the functions on ${\mathcal R}$ taking their values in
$\CC$ and let $f_k$ be the restriction of $f$ to $R_k$.

\medskip

Define the Hilbert space 
$$
\mathcal{H} := \ds \oplus_{k=1}^2 L^2(R_k) = L^2 (\mathcal{R})
$$
with inner product
$$((u_k),(v_k))_{\mathcal H} = \ds \sum_{k=1}^2
(u_k,v_k)_{L^2(R_k)}$$
and introduce the following transmission
conditions (see \cite{kost,amamnic}):

\be
\label{t0}
(u_k)_{k=1,2} \in \ds \oplus_{k=1}^2
C(\overline{R_k}) \; \hbox{satisfies} \; u_1(0) = u_2(0)=u_2(L),
\ee

\be
\label{t1}
(u_k)_{k=1,2} \in \ds \oplus_{k=1}^2
C^1(\overline{R_k}) \; \hbox{satisfies} \; \ds \sum_{k=1}^2
\frac{du_k}{dx}(0^+) - \frac{du_2}{dx}(L^-)= i \, \alpha u_1 (0),
\ee
where $\alpha$ is a positive constant. 

\medskip

Let $H : {\mathcal D}(H) \subset {\mathcal H} \rightarrow {\mathcal H}$ be the linear operator on
${\mathcal H}$ defined by:
$$
{\mathcal D}(H) = \left\{(u_k)_{k=1,2} \in \oplus_{k=1}^2 H^2(R_k); \, (u_k)_{k=1,2} \;
\hbox{satisfies} \; \rfb{t0},\rfb{t1} \right\},
$$
$$
H (u_k)_{k=1,2} = (H_{k} u_k)_{k=1,2} = \left(- \frac{d^2 u_k}{dx^2} \right)_{k=1,2} = - \Delta_{{\mathcal R}}(u_k)_{k=1,2}.
$$

The operator $H$ generates a $C_0$ semigroup of contractions on $\mathcal{H}$, denoted by $(e^{itH})_{t \geq 0}$. We have in particular, for all $u_0 \in \mathcal{H}$, the following Schr\"odinger system
\begin{equation} 
\left\{
\begin{array}{l}
u^\prime = iH u, \, \mathcal{R} \times (0,+\infty),\\
u(0)= u_{0}, \, \mathcal{R},
\end{array}
\right.
\label{pbfirstorderch}
\end{equation}
admits a unique mild solution $u\in C([0,+\infty);\mathcal{H})$ and satisfies the energy identity:

\be
\label{energid}
\left\|u(t)\right\|^2_{\mathcal{H}} - \left\|u(0)\right\|^2_{\mathcal{H}} =- 2 \alpha \, \int_0^t \left|u_1(0,s)\right|^2 \, ds, \,\forall \, t  \geq 0. 
\ee

Here, we prove that the damped Schr\"odinger semigroup on the tadpole graph ${\mathcal R}$ satisfies an exponential stability
estimate. More precisely, we will prove the following theorem.
\begin{theorem} \label{mainresult1}
For all $t
\geq 0$, \be \label{dispest} \left\|e^{itH}
\right\|_{\mathcal{L}(\mathcal{H}_p^-)} \leq C \,
e^{-\omega t}, \ee where $C$ is a positive constant independent of $t$, $\omega= \frac{8\alpha}{3L}$ and $\mathcal{H}_p^-$ is a subspace of $\mathcal{H}$ which will be specified later (Theorem \ref{energy}).
\end{theorem}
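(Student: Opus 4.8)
The plan is to reduce the semigroup estimate \rfb{dispest} to a statement about the location of the point spectrum of $H$ together with a Riesz basis expansion on $\mathcal{H}_p^-$. First I would carry out the spectral analysis announced in the abstract. Writing the spectral parameter as $\lambda = k^2$ and solving $-\phi_k'' = \lambda \phi_k$ on each edge, I impose the $L^2$ condition at infinity on the half-line $R_1$, which selects the outgoing branch $\phi_1(x) = a\, e^{ikx}$ with $\mathrm{Im}\,k > 0$, and I take the general solution on the loop $R_2$. Substituting into the transmission conditions \rfb{t0}--\rfb{t1} produces a transcendental characteristic equation whose roots are the eigenvalues of $H$. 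The crucial feature is that the dissipative term $i\,\alpha\, u_1(0)$ enters this equation and pushes its roots off the real axis into the upper half-plane, where $|e^{it\lambda}| = e^{-t\,\mathrm{Im}\,\lambda}$ decays. The quantitative output I would extract is a uniform lower bound $\mathrm{Im}\,\lambda_n \geq \omega$ on the imaginary parts of all point eigenvalues, with $\omega = \tfrac{8\alpha}{3L}$ arising as their infimum; since the high-frequency eigenfunctions concentrate on the loop and leak energy through the junction at a rate fixed by the coupling, I expect $\mathrm{Im}\,\lambda_n$ to stay bounded below (indeed to approach a positive constant). This uniform spectral gap is exactly what separates the decaying point spectrum from the continuous spectrum $[0,+\infty)$ contributed by the half-line.

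Next I would use the explicit kernel of $(H-\lambda)^{-1}$ to isolate the contribution of the discrete eigenvalues from the absolutely continuous part, thereby defining $\mathcal{H}_p^-$ (Theorem \ref{energy}) and the associated spectral projection. On $\mathcal{H}_p^-$ the operator $H$ acts with a complete family of (generalized) eigenfunctions $\{\phi_n\}$, and the decisive functional-analytic step is to prove that $\{\phi_n\}$ is a Riesz basis of $\mathcal{H}_p^-$. I would establish this by first determining the asymptotics of the $\lambda_n$ — these should be close to the loop spectrum $(n\pi/L)^2$ with uniformly separated values — and then showing the normalized eigenfunctions are quadratically close to an orthonormal system, so that a Bari-type perturbation criterion for Riesz bases applies.

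With the Riesz basis in hand, the estimate follows cleanly. For $u_0 \in \mathcal{H}_p^-$ I expand $u_0 = \sum_n c_n \phi_n$, so that
\be
e^{itH} u_0 = \sum_n c_n\, e^{it\lambda_n}\, \phi_n ,
\ee
and the Riesz bounds $C_1 \sum_n |c_n|^2 \leq \|u_0\|^2_{\mathcal{H}} \leq C_2 \sum_n |c_n|^2$ together with $|e^{it\lambda_n}| = e^{-t\,\mathrm{Im}\,\lambda_n} \leq e^{-\omega t}$ yield
\be
\left\| e^{itH} u_0 \right\|^2_{\mathcal{H}} \leq C_2 \sum_n |c_n|^2\, e^{-2t\,\mathrm{Im}\,\lambda_n} \leq \frac{C_2}{C_1}\, e^{-2\omega t}\, \|u_0\|^2_{\mathcal{H}} ,
\ee
which is precisely \rfb{dispest} with $C = \sqrt{C_2/C_1}$.

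I expect the Riesz basis property to be the main obstacle. Non-self-adjointness means the eigenfunctions are not orthogonal, one must control or exclude Jordan chains where eigenvalues coincide, and the genuine continuous spectrum coming from the half-line complicates the completeness argument on $\mathcal{H}_p^-$; verifying both the correct asymptotics of $\lambda_n$ and the quadratic-closeness estimate feeding the Bari criterion is where the technical weight lies. Pinning down the \emph{exact} constant $\omega = \tfrac{8\alpha}{3L}$, rather than merely some positive gap, will additionally require a careful study of the low-lying eigenvalues and not just the high-frequency regime.
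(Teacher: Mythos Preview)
Your proposal follows the same architecture as the paper: explicit spectral analysis of the characteristic equation, decomposition of the resolvent kernel to isolate $\mathcal{H}_p^-$, a Riesz basis theorem for the eigenfunctions, and then the semigroup bound by expansion. Your final estimate with the Riesz constants $C_1,C_2$ is in fact cleaner than the paper's Theorem~\ref{energy}, which writes $E^-(t)$ as a plain sum of squares as though the basis were orthonormal.

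There are two points where your sketch diverges from the paper and where you should be careful. First, the Riesz basis argument: you propose a Bari-type quadratic-closeness criterion, whereas the paper argues directly that the coefficient map $\Theta:\ell^2\to\overline{\mathrm{span}\{\psi_n^-\}}$ is an isomorphism, via the Gram estimate $|\langle\psi_{2,n}^-,\psi_{2,m}^-\rangle|\le C/(\langle n\rangle|m-n|)$ coming from the explicit eigenfunction formulas. Your route is standard and would work, but it needs a concrete orthonormal reference system; the paper avoids this by working with the Gram matrix directly. Second, the eigenvalue asymptotics: the $\Sigma_p^-$ roots satisfy $\lambda_n(\alpha)=\tfrac{2n\pi}{L}+i\tfrac{\ln 3}{L}+O(1/n)$, so $\lambda_n^2$ is close to $(2n\pi/L)^2$, not $(n\pi/L)^2$, and $\mathrm{Im}\,\lambda_n^2\sim \tfrac{8\alpha}{3L}+\tfrac{4\pi\ln 3}{L^2}n$ grows linearly rather than approaching a constant. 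The infimum $\omega$ is therefore attained (or approached) at the bottom of the sequence, and pinning it to exactly $\tfrac{8\alpha}{3L}$ is, as you anticipated, a low-frequency computation; the paper is itself somewhat informal on this last point.
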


The proof of this result is based on an appropriate decomposition of
the kernel of the resolvent that in particular gives a full characterization of the spectrum, made only of the
point spectrum and of the absolutely continuous one; showing the absence of a singular continuous part and to careful spectral analysis.

\medskip

The paper is organized as follows. The kernel of the resolvent is
given in Section \ref{sec1} (Theorem \ref{theo1}). Further all eigenfunctions of the dissipative Hamiltonian on the tadpole are constructed. They correspond to the confined modes on the head of the tadpole, which do not interact with the queue. The interaction is described by the absolutely continuous spectrum. Technically the main point is a decomposition
of the kernel of the resolvent in meromorphic and continuous terms (Theorem \ref{lsplitkernel}). The poles are shown to be the eigenvalues of the operator, the continuous term creates the absolutely continuous spectrum. The absence of further terms proves the absence of a singular continuous spectrum. Moreover, a careful spectral analysis (Theorem \ref{lemma1} and Proposition \ref{asyspec}) of the damped Schr\"odinger operator are given in Section \ref{sec1}. In Section \ref{basis}, we prove that the generalized eigenfunctions form a Riesz basis (in Theorem \ref{rieszbasis}) of some subspace of $\mathcal{H}$. The precise formulation and a proof of Theorem \ref{mainresult1} (Theorem \ref{energy}) is given in Section \ref{secenergy}. 

\section{The Resolvent and the spectrum} \label{sec1}
\subsection{The resolvent}
Given $z\in \{z\in \CC: \Im z>0, \, \Re z \leq 0\}$ and $g\in L^2({\mathcal R})$, we are looking
for $u \in {\mathcal D}(H)$ solution of
\[
- \Delta_{{\mathcal R}} u-z^2 u=g \hbox{ in } {\mathcal R}.
\]
Let us use the notation $\omega=-iz$.

Hence we look for $u$ in the form
$$
u_1(x) = \int_0^{+\infty} \frac{g_1(y)}{2\omega} \left( e^{-\omega |x-y|}-F_1(\omega) e^{-\omega (y+x)} \right) \,dy
$$
\be
\label{sergeu1}
- \int_0^L \frac{g_2(y)}{2\omega} \left(F_2(\omega) e^{-\omega (y+x)}+F_3(\omega) e^{\omega (y-x)} \right) \,dy,
\ee
$$
u_2(x) = \int_0^{+\infty} \frac{g_1(y)}{2\omega}\left(G_1(\omega) e^{-\omega (y+x)}+H_1(\omega) e^{-\omega (y-x)} \right) \,dy
$$
$$
 + \int_0^L \frac{g_2(y)}{2\omega} \left(e^{-\omega |x-y|}
+ G_2(\omega) e^{-\omega (y+x)}+G_3(\omega) e^{\omega (y-x)} \right.
$$
\be
\label{sergeu2}
\left.
+ \, H_2(\omega) e^{\omega (x-y)}+H_3(\omega) e^{\omega (x+y)} \right) \,dy,
\ee
where $F_i(\omega)$, $G_i(\omega)$ and  $H_i(\omega)$ , $i=1,2,3$ are constants calculated below
in order to satisfy  \rfb{t0}, \rfb{t1}. Indeed from these expansion, we clearly see that
for $k=1$ or 2:
\[
-u_k''+\omega^2 u_k= g_k \hbox{ in } R_k.
\]
Now we see that the continuity condition \rfb{t0} is satisfied if and only if
\[
\begin{array}{lll}
G_1+H_1=G_1e^{-\omega L}+H_1e^{\omega L}=1-F_1,\\
1+G_2+H_2=G_2e^{-\omega L}+H_2e^{\omega L}=-F_2,\\
G_3+H_3=(1+G_3)e^{-\omega L}+H_3e^{\omega L}=-F_3,
\end{array}
\]
while Kirchhoff condition
\rfb{t1} holds
 if and only if
\[
\begin{array}{lll}
(1 + \frac{i \alpha}{\omega})F_1+G_1(e^{-\omega L}-1)+H_1(1-e^{\omega L})=-1 + \frac{i \alpha}{\omega},\\
(1 + \frac{i \alpha}{\omega})F_2+G_2(e^{-\omega L}-1)+H_2(1-e^{\omega L})=-1,\\
(1 + \frac{i \alpha}{\omega})F_3+G_3(e^{-\omega L}-1)+H_3(1-e^{\omega L})=e^{-\omega L}.
\end{array}
\]

These equations correspond to three linear systems in $F_i, G_i, H_i$, $i=1,2,3$, whose associated matrix has a determinant $D(\omega)$ given by
\[
D_\alpha(\omega)=\left(1-\frac{i\alpha}{\omega}\right)e^{\omega L}\left(1- e^{-\omega L}\right)\left(e^{-\omega L}-\frac{3\omega+i\alpha}{\omega-i\alpha}\right).
\]
Since this determinant is different from zero (as $\Im z>0, \Re z \leq 0$), we deduce the following expressions:

\beq\label{sergef1}
F_1(\omega)=1+\frac{2\omega}{i\alpha-\omega}\frac{e^{-\omega L}+1}{e^{-\omega L}-\frac{3\omega+i\alpha}{\omega-i\alpha}},\\
\label{sergeg1}
G_1(\omega)=\frac{2\omega}{(i\alpha-\omega)\left(e^{-\omega L}-\frac{3\omega+i\alpha}{\omega-i\alpha}\right)},\\
\label{sergeh1}
H_1(\omega)=\frac{2\omega e^{-\omega L}}{(i\alpha-\omega)\left(e^{-\omega L}-\frac{3\omega+i\alpha}{\omega-i\alpha}\right)},\\
\label{sergef2}
F_2(\omega)=\frac{2\omega}{(i\alpha-\omega)\left(e^{-\omega L}-\frac{3\omega+i\alpha}{\omega-i\alpha}\right)},\\
\label{sergeg2}
G_2(\omega)=\frac{w+i\alpha}{\omega-i\alpha}\frac{1}{(e^{-\omega L}-1)\left(e^{-\omega L}-\frac{3\omega+i\alpha}{\omega-i\alpha}\right)}\\
\label{sergeh2}
H_2(\omega)=\frac{2\omega+(i\alpha-\omega)e^{-\omega L}}{\omega D_\alpha(\omega)},\\
\label{sergef3}
F_3(\omega)=-\frac{2 e^{-\omega L}  \left(e^{-\omega L}-1\right)}{D_\alpha(\omega)},\\
\label{sergeg3}
G_3(\omega)=\frac{(\omega+i\alpha)e^{-\omega L}}{\omega D_\alpha(\omega)},\\
\label{sergeh3}
H_3(\omega)=\frac{e^{-\omega L}}{D_\alpha(\omega)}\left( 2e^{-\omega L}-\frac{3\omega +i\alpha}{\omega}\right).
\eeq

Inserting these expressions in \rfb{sergeu1}-\rfb{sergeu2}, we have obtained the next result.

\begin{theorem} \label{theo1}
Let   $f \in \mathcal H$. Then, for $x \in {\mathcal R}$ and $z\in \mathbb{C}$ such that
$\Im z> 0, \,\Re z \leq 0$, we have
\be\label{resolvantformula} [R(z^2, H)f](x)= \int_{{\mathcal R}} K(x, x', z^2) f(x') \; dx',
\ee
where
the kernel $K$ is defined as follows:
\beq
\label{kernel11} K(x,y,z^2) &= &
    \frac{1}{2iz}  \left(e^{iz|x-y|} -F_1(-iz) e^{iz (x+y)}\right),   \forall x, y \in  {R_1},
\\
K(x,y,z^2) &=& -\frac{1}{2iz} \left(F_2(-iz) e^{iz (y+x)}+F_3(-iz) e^{-iz (y-x)}\right),  \forall x\in R_1, y\in R_2,
\label{kernel12}
\\
\label{kernel22}
K(x,y,z^2) &=&       \frac{1}{2iz}  \Big(e^{iz |x-y|}
+G_2(-iz) e^{iz (y+x)}+G_3(-iz) e^{-iz (y-x)}
\\
\nonumber
&+&H_2(-iz) e^{-iz (x-y)}+H_3(-iz) e^{-iz (x+y)} \Big),  \forall x, y \in  {R_2},
\\
K(x,y,z^2)&=&  \frac{1}{2iz} \left(G_1(-iz) e^{iz (y+x)}+H_1(-iz) e^{iz(y-x)} \right),  \forall x\in R_2, y\in R_1.
\label{kernel21}
\eeq
\end{theorem}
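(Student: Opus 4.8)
The plan is to construct the resolvent directly by the variation-of-parameters (Green's function) method on the metric graph and then read off its integral kernel. The ansatz \rfb{sergeu1}--\rfb{sergeu2} is the natural candidate: on each edge the term $\frac{1}{2\omega}e^{-\omega|x-y|}$ is the whole-line Green's function of $-\partial_x^2 + \omega^2$, while the remaining exponentials are homogeneous solutions added to enforce the coupling at the vertex. The choice of exponentials is dictated by the geometry: on the semi-infinite edge $R_1$ only the decaying mode $e^{-\omega x}$ is admissible, so that $u_1 \in H^2(R_1)$, whereas on the loop $R_2$ both $e^{\pm \omega x}$ occur. Note that the hypothesis $\Im z > 0$, $\Re z \le 0$ gives $\Re\omega = \Im z > 0$ (with $\omega = -iz$), which is exactly what guarantees decay at infinity and integrability of $u_1$.

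The first step is to check that, for \emph{any} values of the constants $F_i, G_i, H_i$, the functions $u_1, u_2$ solve $-u_k'' + \omega^2 u_k = g_k$ edgewise: the added exponentials lie in the kernel of $-\partial_x^2 + \omega^2$, and the jump of $\partial_x e^{-\omega|x-y|}$ across $x = y$ produces precisely the Dirac source, as already noted in the excerpt. This reduces the problem to selecting the constants so that $u \in \Dscr(H)$, i.e. so that the transmission conditions \rfb{t0}--\rfb{t1} hold.

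The core of the argument is to impose \rfb{t0}--\rfb{t1}. Since they must hold for every $g = (g_1,g_2) \in \mathcal H$, I would match integrands: evaluating $u_1, u_2$ and their first derivatives at $0$ and at $L$ and collecting the three independent $y$-profiles — the profile $e^{-\omega y}$ coming from $g_1$ on $R_1$, and the profiles $e^{-\omega y}$, $e^{\omega y}$ coming from $g_2$ on $R_2$ — produces three decoupled $3\times 3$ linear systems, for $(F_1,G_1,H_1)$, for $(F_2,G_2,H_2)$ and for $(F_3,G_3,H_3)$. These are exactly the continuity and Kirchhoff identities displayed above, regrouped by the index $i$, and, crucially, all three share the same coefficient matrix, whose determinant is $D_\alpha(\omega)$.

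The final step is computational: solve the three systems by Cramer's rule to obtain \rfb{sergef1}--\rfb{sergeh3}, then substitute $\omega = -iz$ back into \rfb{sergeu1}--\rfb{sergeu2} and regroup the $R_1$- and $R_2$-integrals to reach the four kernel blocks \rfb{kernel11}--\rfb{kernel21}. The single genuine obstacle is the invertibility of the common coefficient matrix, i.e. the nonvanishing of $D_\alpha(\omega)$ on all of $\{\Im z > 0,\ \Re z \le 0\}$; this is where the spectral hypothesis enters essentially. Indeed, there $\Re\omega = \Im z > 0$ gives $|e^{-\omega L}| < 1$, so $1 - e^{-\omega L}$ and the factor $e^{-\omega L} - \frac{3\omega + i\alpha}{\omega - i\alpha}$ are nonzero (a short estimate using $\Re\omega > 0$ and $\Im\omega = -\Re z \ge 0$ gives $\bigl|\tfrac{3\omega + i\alpha}{\omega - i\alpha}\bigr| > 1$), while $1 - i\alpha/\omega \ne 0$ because $\omega$ is not purely imaginary. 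Once this nonvanishing is secured, the remaining steps are routine bookkeeping.
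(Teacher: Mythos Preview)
Your proposal is correct and follows essentially the same route as the paper: set up the ansatz \rfb{sergeu1}--\rfb{sergeu2}, note that it solves the ODE edgewise, impose \rfb{t0}--\rfb{t1} to obtain the three $3\times3$ systems with common determinant $D_\alpha(\omega)$, solve, and substitute back. The only place you go beyond the paper is in actually arguing that $D_\alpha(\omega)\ne 0$ on $\{\Im z>0,\ \Re z\le 0\}$ via the estimate $\bigl|\tfrac{3\omega+i\alpha}{\omega-i\alpha}\bigr|>1>|e^{-\omega L}|$, whereas the paper merely asserts the nonvanishing parenthetically.
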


\subsection{The spectrum}
As usual, to obtain the resolution of the identity of $H$, we want to use the limiting absorption principle
that consists to pass to the limit in $K(x,y,z^2)$ as $\Im z$ goes to zero.
But in view of the presence of the factor $e^{iz L}-1$  in the denominator of $G_2, G_3, H_2, H_3$,
this limit is a priori not allowed. This factor comes from the circle $R_2$ and suggests that the real point spectrum is distributed in the whole
continuous spectrum. This is indeed the case has the next results will show.

\begin{theorem} \label{lemma1}
The spectrum of $H$ is given by the  disjoint union of $[0,+\infty[$ and $\Sigma_p^-(H)$ where
\begin{equation}\label{thespectrum}
 \Sigma_p^-(H)=\left\{\lambda^2; \lambda \in \mathbb{C}_+^+\; \text{and}\; e^{i\lambda L}=3-\frac{4\alpha}{\lambda+\alpha} \right\},
\end{equation}
where $\mathbb{C}_+^+:=\left\{z \in \mathbb{C};\, \Re z > 0, \Im z > 0 \right\}.$

\medskip

Indeed, for all $k\in \NN^*$, the number $\lambda_{2k}^2=\frac{4 k^2\pi^2}{L^2}$ is an eigenvalue of $H$
 and an associated eigenvector $\varphi^{(2k)}\in {\mathcal D}(H)$ given by
 \beq\label{vp1}
 \varphi^{(2k)}_1=0 \hbox{ in } R_1,\\
  \varphi^{(2k)}_2(x)=\frac{\sqrt{2}}{\sqrt{L}}\sin (\lambda_{2k} x), \forall x\in R_2.
  \label{vp2}
  \eeq
  Furthermore $\Sigma_p^-(H)$ is a discrete set of eigenvalues of $H.$ 
	\end{theorem}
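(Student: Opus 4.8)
The plan is to read the entire spectrum off the analytic structure, in the spectral parameter $\zeta=z^{2}$, of the explicit resolvent kernel $K(\cdot,\cdot,z^{2})$ furnished by Theorem \ref{theo1}. Writing $z=\lambda$ and continuing the kernel to $\lambda\in\CC_+^+$ (the open first quadrant, where $\zeta=\lambda^{2}$ sweeps the open upper half-plane), the resolvent $R(\zeta,H)$ is holomorphic away from two sources of singularity: the branch cut generated by the factor $\frac{1}{2iz}$ together with the free term $e^{iz|x-y|}$ on the semi-infinite edge $R_1$, which produces the continuous spectrum $[0,+\infty)$ (made precise by the limiting absorption principle and the meromorphic/continuous splitting of Theorem \ref{lsplitkernel}); and the poles coming from the vanishing of the determinant $D_\alpha(\omega)$, $\omega=-iz$, which carry the point spectrum. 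First I would therefore localize and classify the zeros of $D_\alpha$.

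Second, using the factorization
\[
D_\alpha(\omega)=\Big(1-\tfrac{i\alpha}{\omega}\Big)e^{\omega L}\big(1-e^{-\omega L}\big)\Big(e^{-\omega L}-\tfrac{3\omega+i\alpha}{\omega-i\alpha}\Big),
\]
I would treat the factors separately under the substitution $z=\lambda$, i.e.\ $\omega=-i\lambda$, which turns $e^{-\omega L}$ into $e^{i\lambda L}$ and $\frac{3\omega+i\alpha}{\omega-i\alpha}$ into $\frac{3\lambda-\alpha}{\lambda+\alpha}$. The last factor vanishes exactly when $e^{i\lambda L}=\frac{3\lambda-\alpha}{\lambda+\alpha}=3-\frac{4\alpha}{\lambda+\alpha}$, the defining relation of $\Sigma_p^-(H)$; since $|e^{i\lambda L}|=e^{-L\,\Im\lambda}<1$ forces $\Im\lambda>0$, and a residue computation shows the associated eigenfunction behaves like $e^{i\lambda x}$ on $R_1$ (hence lies in $L^{2}$ precisely when $\Im\lambda>0$), these zeros sit in $\CC_+^+$ and yield genuine eigenvalues $\lambda^{2}$ in the open upper half-plane, disjoint from $[0,+\infty)$. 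The middle factor $1-e^{-\omega L}$ vanishes at $\omega=\tfrac{2\pi i k}{L}$, i.e.\ $\lambda=\tfrac{2k\pi}{L}=\lambda_{2k}$; crucially, in the cross blocks \rfb{kernel12}, \rfb{kernel21} and in \rfb{kernel11} this factor is either absent from the relevant coefficients or cancels (as in $F_3$, whose numerator carries the same factor), so the poles at $\lambda_{2k}$ survive only in the loop-to-loop block \rfb{kernel22}. The residues therefore produce eigenfunctions supported on the head $R_2$ and vanishing on $R_1$ — the confined modes embedded in $[0,+\infty)$.

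Third, I would verify the explicit eigenpair directly rather than through residues: for $\lambda_{2k}=2k\pi/L$ the function $\varphi^{(2k)}_1\equiv 0$, $\varphi^{(2k)}_2(x)=\sqrt{2/L}\,\sin(\lambda_{2k}x)$ solves $-\varphi''=\lambda_{2k}^{2}\varphi$, satisfies \rfb{t0} because $\sin 0=\sin(2k\pi)=0$, and satisfies \rfb{t1} because $\varphi_2'(0)=\varphi_2'(L)$ together with $\varphi_1(0)=0$ makes both sides vanish; normalization gives $\|\varphi^{(2k)}\|=1$. This confirms $\lambda_{2k}^{2}=4k^{2}\pi^{2}/L^{2}\in\sigma_p(H)$ with the stated eigenvector.

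Finally, for discreteness of $\Sigma_p^-(H)$ I would observe that $g(\lambda):=e^{i\lambda L}-3+\frac{4\alpha}{\lambda+\alpha}$ is holomorphic on $\CC\setminus\{-\alpha\}$ and not identically zero, so its zeros are isolated; the estimate $|e^{i\lambda L}|\to 0$ as $\Im\lambda\to+\infty$ while $|3-\frac{4\alpha}{\lambda+\alpha}|\to 3$ excludes zeros with large imaginary part, and the asymptotic location of the remaining zeros (Proposition \ref{asyspec}) confines them to a horizontal strip with no finite accumulation point, giving a discrete set. The main obstacle I anticipate is not the algebra but the analytic bookkeeping at the boundary of the physical sheet: one must justify the limiting absorption principle to identify the continuous part as exactly $[0,+\infty)$, and then argue that the only real zeros of $D_\alpha$ surviving as true (embedded) eigenvalues are the $\lambda_{2k}$ coming from $1-e^{-\omega L}$, while real zeros of the last factor are resonances — their formal eigenfunction $e^{i\lambda x}$ fails to decay on $R_1$ and is not in $L^{2}$. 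This separation, and hence the disjointness of the union together with the absence of a singular continuous part, is where the decomposition of Theorem \ref{lsplitkernel} does the real work.
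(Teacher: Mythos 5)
Your route is genuinely different from the paper's. The paper proves Theorem \ref{lemma1} without ever touching the resolvent kernel: the inclusion $[0,+\infty)\subset\sigma(H)$ is obtained by the Weyl criterion with explicit quasi-modes $\theta_{n,\lambda}(x)=n^{-1/2}e^{i|\lambda|x}\chi(x/n-1)$ supported on the queue $R_1$; the embedded eigenvalues $\lambda_{2k}^2$ by the same direct verification you give in your third step; the fact that these are the \emph{only} real eigenvalues by the elementary observation that $c_1\sin(\lambda x)+c_2\cos(\lambda x)\notin L^2(R_1)$ unless $c_1=c_2=0$, after which Dirichlet conditions at $0,L$ and the Kirchhoff condition select even modes; and the non-real eigenvalues by solving $Hu=\lambda^2u$ with the ansatz $u_k=A_ke^{i\lambda x}+B_ke^{-i\lambda x}$, $B_1=0$, leading to a $2\times 2$ system whose determinant $d(\lambda)$ factors in $T=e^{i\lambda L}$ with roots $T_1=1$ and $T_2=3-\frac{4\alpha}{\lambda+\alpha}$, discreteness following from analyticity of $h_\alpha$ on $\CC_+^+$. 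Your proposal instead reads the spectrum off the analytic structure of the kernel of Theorem \ref{theo1} together with the splitting of Theorem \ref{lsplitkernel}. That route is coherent and, if completed, buys more than the paper's proof of this theorem does (it simultaneously yields the ``no other spectrum'' direction and the absence of singular continuous spectrum, which the paper delegates to the decomposition theorem and to \cite{amamnic}); but for the statement at hand it is substantially heavier machinery, and your factorization analysis of $D_\alpha$ (cancellation of $1-e^{-\omega L}$ outside the loop-to-loop block, confined residues at $\lambda_{2k}$) is in substance a re-derivation of Theorem \ref{lsplitkernel}.

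Two genuine gaps remain as written. First, the analytic backbone of your argument --- the limiting absorption principle identifying $[0,+\infty)$, the fact that the meromorphically continued kernel defines a bounded inverse off its poles on the physical sheet ($z\in\CC_+^+$, $z^2$ in the upper half-plane), and the fact that its residues are $L^2$ eigenprojections rather than resonance states --- is invoked but not executed; you flag it yourself as ``the main obstacle,'' but it is precisely the content of the claim, so the proposal does not yet prove $[0,+\infty)\subset\sigma(H)$, whereas the paper's Weyl sequence does so in a few lines and would be the economical fix. Second, the localization claim ``since $|e^{i\lambda L}|=e^{-L\Im\lambda}<1$ forces $\Im\lambda>0$'' is backwards: nothing guarantees $\bigl|3-\frac{4\alpha}{\lambda+\alpha}\bigr|<1$ a priori, so the modulus of $e^{i\lambda L}$ cannot be the source of the constraint; as you correctly say elsewhere, $\Im\lambda>0$ comes from requiring $e^{i\lambda x}\in L^2(R_1)$, and that is the argument to keep. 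Note in this connection that a \emph{real} zero of the last factor would force $3-\frac{4\alpha}{\lambda+\alpha}=1$, i.e.\ $\lambda=\alpha$ with $e^{i\alpha L}=1$, which already belongs to the $T=1$ family; so your ``resonance versus embedded eigenvalue'' worry on the real axis is resolved not by the kernel splitting but by the paper's elementary $L^2(R_1)$ argument, which shows directly that every real eigenfunction vanishes on the queue and hence is one of the $\varphi^{(2k)}$.
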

\begin{proof}
We start by proving that the semi-axis $[0,+\infty[$ is a part of the spectrum of $H$ and that a  number $\lambda ^2\in [0,+\infty[$ is an eigenvalue if and only if there exists $k\in\NN^*$ such that $\lambda^2=\frac{4 k^2\pi^2}{L^2}.$

\medskip

 Let $\lambda ^2\ge 0$ and $\chi : \mathbb{R}\rightarrow [0,1]$ a smooth function such that $\chi(x)=1$ for $|x|\le 1/2$ and $\chi(x)=0$ for $|x|\ge 1.$  For $n\in\NN^*$ we define the ansatz $\theta_{n,\lambda}$ by $\theta_{n,\lambda}(x)=\frac{e^{i|\lambda|x}}{\sqrt{n}}\chi\left( \frac{x}{n}-1\right),$ for $x\ge 0.$ We indeed define the sequence
 $(\varphi^{(n)})_n=((\theta_n^\lambda,0))_n$.  We claim that $\lambda^2$ is an approximate eigenvalue of $H$ in the sense that $\varphi^{(n)}\in {\mathcal D}(H), \forall n\in\NN^*,  $ $\|\varphi^{(n)}\|_{\mathcal{H}}\not\to 0$   and $\|(H+\lambda^2)\varphi^{(n)}\|_{\mathcal{H}}\longrightarrow 0$  when $n\to +\infty.$ Then the Weyl criterion  proves that $\lambda^2$ is in the spectrum of $H.$
It is not hard to check that $\varphi^{(n)}\in\mathcal{D}(H)$ for all $n\ge 1$ and that 
$$
\|\varphi^{(n)}\|_{\mathcal{H}}^2=\|\theta^{\lambda}_n\|_{L^2}^2\ge\frac{1}{n}\int_{n/2}^{3n/2}dx=1. 
$$
Moreover, we remark that $\theta^{\lambda}_n(x)=e^{i|\lambda|x}\theta^{0}_n(x)$ for $n\ge 1$ and $x\ge 0$ and  
$$
\|(\theta^{0}_n)'\|_{L^2}^2\le \frac{2}{n^2}\|\chi'\|_{\infty}^2, \quad
\|(\theta^{0}_n)''\|_{L^2}^2\le \frac{2}{n^4}\|\chi''\|_{\infty}^2.
$$
Hence we obtain 
\begin{eqnarray}
\|(H+\lambda^2)\varphi^{(n)}\|_{\mathcal{H}}&=& \|(\theta^{\lambda}_n)''+\lambda^2\theta_n^\lambda\|_{L^2}\\
&\le& 2|\lambda|\|(\theta^{0}_n)'\|_{L^2}+\|(\theta^{0}_n)''\|_{L^2}\\
&\le& \frac{2}{n^2}\left( \sqrt{2\lambda^2n^2}\|\chi'\|_{\infty}+\|\chi''\|_{\infty}\right).
\end{eqnarray}
To prove that the numbers $\lambda_{2k}^2, k\in\NN^* $ are eigenvalues is direct since we readily check that $\varphi^{(2k)}$ defined by (\ref{vp1})-(\ref{vp2}) is indeed in ${\mathcal D}(H)$
and satisfies
$H\varphi^{(2k)}=\lambda_{2k}^2\varphi^{(2k)}$.

For the second assertion, we simply remark that if
$\varphi$ is an eigenvector of $H$ of eigenvalue $\lambda^2$, then
for $\lambda>0$, we have
\[
\varphi_1(x)= c_1 \sin (\lambda x)+c_2\cos (\lambda x), \forall x\in R_1,
\]
with $c_i\in \CC$. But the requirement that $\varphi_1$ belongs to $L^2(R_1)$ directly implies that
$c_1=c_2=0$. Hence $\varphi$ has to be in the form of the first assertion.
In the case $\lambda=0$,  $\varphi_1$ has to be zero and therefore
\[
\varphi_2(x)= c_1 +c_2x, \forall x\in R_2,
\]
with $c_i\in \CC$. By the continuity property at 0, we get
$c_1=c_1+c_2L=0$, hence $c_1=c_2=0$.\\
Now we look for other eigenvalues. Let $\lambda \in \mathbb{C}_+^+$ and let us find a solution of $Hu=\lambda^2 u$ in the domain ${\mathcal D}(H).$
The parameter $\lambda$ can be considered as a complex branch of the square root of $\lambda^2.$ Thus we will often choose the branch $\lambda$ such that $\Re(i\lambda^2)<0.$ This is because of the dissipativity of $iH.$

\medskip

A fundamental system of solution of the differential equation  $-\Delta f=\lambda^2 f$ is $\{e^{i\lambda x},e^{-i\lambda x}\}.$
Hence if we let $u_k(x)=A_ke^{i\lambda x}+B_ke^{-i\lambda x}, A_k\in \mathbb{C}, B_k\in\mathbb{C}$ for $k=1,2$ then $u=(u_1,u_2)\in \mathcal{D}(H)$ yields 
\begin{itemize}
	\item the continuity condition 
$$
A_1+B_1=A_2+B_2=A_2e^{i\lambda L}+B_2e^{-i\lambda L}
$$ 
\item and  the Kirchhoff transmission condition  
$$
\left( 1-\frac{\alpha}{\lambda}\right)A_1-\left( 1+\frac{\alpha}{\lambda}\right)B_1+\left( 1-e^{i\lambda L}\right)A_2-\left( 1-e^{-i\lambda L}\right)B_2=0.
$$
\end{itemize}
Since the eigenvector $(u_1,u_2)$ has to be in the domain of $H$ and $\Im \lambda \ge 0, $ we can see that $B_1=0.$ Therefore $A_1=A_2+B_2$ and $A_2,B_2$  are the solutions  of the linear system 
\begin{equation}
\left\{
\begin{array}{l}
(1-e^{i\lambda l})A_2+(1-e^{-i\lambda L})B_2=0\\
\left(2-\frac{\alpha}{\lambda}-e^{i\lambda L} \right)A_2+\left(e^{-i\lambda L}-\frac{\alpha}{\lambda} \right)B_2=0.
	\end{array}
\right.
\label{sys1}
\end{equation}
The determinant of the system is 
$$
d(\lambda):=e^{-i\lambda L}\left( \left(\frac{\alpha}{\lambda}+1 \right)e^{2i\lambda L}-4e^{i\lambda L}+3-\frac{\alpha}{\lambda}\right).
$$ 
The reals $\lambda_{2k}, k\in \NN^*$ are roots of $d(\lambda)=0$ and they correspond to the eigenvalues given in the first assertion.

Now if we set $T=e^{i\lambda L}$ then $d(\lambda)=0$ if and only if $(\frac{\alpha}{\lambda}+1)T^2-4T+3-\frac{\alpha}{\lambda}=0.$ The two roots of the previous polynomial are $T_1=1$ and $T_2=3-\ds\frac{4\alpha}{\lambda+\alpha}.$ Thus $\lambda$ is a zero of the  function $h_\alpha(\lambda)=e^{i\lambda L}+\ds\frac{4\alpha}{\lambda+\alpha}-3$ in the domain $\mathbb{C}_+^+.$ Since this is an open subset of $\mathbb{C}$ and $h_\alpha$ is analytic on it, then $h_\alpha$ has a discrete set of zeros with no accumulation point.
\end{proof}
\begin{remark}
\begin{enumerate}
	\item  We shall see below that the eigenvalues
$(\lambda_{2k}^2)_{k\in \mathbb{N}^*}$ are embedded in
the continuous spectrum with corresponding eigenfunctions
$\varphi^{(2k)}$ which are confined in the circle.
\item For the non real  eigenvalues $\lambda_{\alpha, k}^2$ we can see that there exists $c>0$ such that $\Im \lambda_{\alpha, k}^2\ge c, \forall k\in\NN.$ For such eigenvalues, we already know that $\lambda_{\alpha, k}$ satisfies  the relation 
\be
e^{i\lambda_{\alpha, k} L}=3-\ds\frac{4\alpha}{\lambda_{\alpha, k}+\alpha}.
\label{eqcar}
\ee
 Setting $\lambda_{\alpha,k}=\sigma_k+i\mu_k$ then $\Im\lambda_{\alpha,k}^2=2\sigma_k\mu_k. $  If we assume that there exists a subsequence of $\lambda_k$ such that $\Im\lambda_k^2\to 0$ when $k\to \infty$ then by taking the modulus in the relation (\ref{eqcar}) and  passing to the limit when $k\to \infty$ we get $1=3.$ This is a contradiction.
\item Using the two remarks above, the spectrum of $H$ is the union of two separated sets $\sigma_1$ and $\sigma_2$ where $\Sigma_p^-=\{\lambda_{\alpha,k}^2, k\in\NN\}$ and $\sigma_2=[0,+\infty[=\sigma_{c}(H)\cup\{\lambda_{2k}^2; k\in\NN^*\}$ and $\sigma_{c}$ is the  continuous spectrum of $H$ and contains the numbers $\lambda_{2k}^2, k\in\NN^*$ as a sequence of embedded eigenvalues.
 
\end{enumerate}
\end{remark}
\subsection{Asymptotic of the point spectrum}
Let $\alpha>0$ and recall that we have the following disjoint decomposition of the point spectrum  $$\Sigma_p(H)=\Sigma_p^+(H)\cup \Sigma_p^-(H)$$ with 
$
\Sigma_p^-(H)
$
is given by (\ref{thespectrum}) and 
$$
\Sigma_p^+(H)=\left\{\frac{4\pi^2k^2}{L^2}; k\in \NN^*\right\}.
$$
We want to find a precise localization of $\Sigma_p^-(H).$ We consider the meromorphic function  $h_\alpha$ defined by $$h_\alpha(\lambda)=e^{i\lambda L}+\frac{4\alpha}{\lambda +\alpha}-3.$$
We already know that $\lambda^2$ is an eigenvalue of $H$ if $h_\alpha(\lambda)=0$ and $\lambda\in \CC^+_+.$
The function $h_\alpha$ is analytic on $\CC^+_+$ so it has a discrete set of roots. Let's denote by $(\lambda_n(\alpha))_n$ the sequence of these roots.
When $\alpha=0,$ it is a straightforward  calculus to find that the numbers $\lambda_n(0)=\frac{2n\pi}{L}+i\frac{\ln(3)}{L}, n\in\mathbb{N}$ are roots of $h_0.$  Hence we have 
$$
\lambda_n^2(0)=\frac{4n^2\pi^2-\ln^2(3)}{L^2}+\frac{4in\pi \ln(3)}{L^2}.
$$
Now, we consider the operator family $\left\{H; \alpha\in\RR\right\}:=(H_\alpha)_{\alpha\in\RR}.$ This is an analytic family of type $B$ in the sense of Kato (\cite{kato}, chap. VII.4). By perturbation arguments, there exists, for small $\alpha,$  an analytic function $\alpha\mapsto \lambda_n^2(\alpha)$ such that $\lambda_n^2(\alpha)$ is in the spectrum of $H_\alpha.$ Let us write 
$$
\lambda_n(\alpha)=\lambda_n(0)+a\alpha+b\alpha ^2+O(\alpha^3)
$$
 where $a$ and $b$ are complex numbers we are going to compute.\\
On one hand, we have, for small $\alpha,$ 
\begin{eqnarray}
e^{iL\lambda_n(\alpha)}&=&e^{i\lambda_n(0)L}.e^{iLa\alpha+iLb\alpha^2+O(\alpha^3)}\nonumber\\
&=& 3\left(1+iaL\alpha+ibL\alpha^2-\frac{a^2L^2\alpha^2}{2}+O(\alpha^3)\right)\nonumber\\
&=& 3+3ia L\alpha+3\left(ibL-\frac{a^2L^2}{2}\right)\alpha^2+O(\alpha^3)\label{eq1}.
\end{eqnarray}
On the other hand, we can develop the following
\begin{eqnarray}
3-\frac{4\alpha}{\lambda_n(\alpha)+\alpha}&=& 3-\frac{4\alpha}{\lambda_n(0)+(a+1)\alpha+b\alpha^2+O(\alpha)^3}\nonumber\\
&=& 3-\frac{4\alpha}{\lambda_n(0)}+\frac{4(a+1)}{\lambda_n^2(0)}\alpha^2+O(\alpha^3).\label{eq2}
\end{eqnarray}
Using (\ref{eqcar}) and identifying (\ref{eq1}) and (\ref{eq2}), yield
$$
a=\frac{4i}{3L\lambda_n(0)}$$
and 
$$ b=\frac{16}{9L^2\lambda_n^3(0)}-\frac{4i}{9L\lambda_n^2(0)}.
$$
Now, we want to prove that the asymptotic expansion valid for small $\alpha$ remains true for large $n$ and any fixed $\alpha.$ The $\lambda_n(\alpha)$ satisfies the relation $(\alpha+\lambda_n)e^{i\lambda_n L}=3\lambda_n-\alpha.$ Deriving this equation with respect to $\alpha$ yields
$$
\lambda_n'(\alpha)=\frac{4i\lambda_n(\alpha)}{3L\lambda_n^2(\alpha)+2\alpha L\lambda_n(\alpha)-\alpha^2 L+4i\alpha}.
$$
Using the fact that $|\lambda_n(\alpha)|$ behaves like $\frac{2n\pi}{L}$ for large $n,$ we can see that $|\lambda'_n(\alpha)|=O\left(\frac{1}{n}\right).$ Thus 
\begin{equation}\label{asymtlambda}
\lambda_n(\alpha)=\frac{2n\pi}{L}+i\frac{\ln(3)}{L}+O(\frac{1}{n}), \; n\to +\infty.
\end{equation}
Then we obtain the  asymptotic expansion of $\lambda_n(\alpha)$ for large $n$ in the same way we did for small $\alpha.$ 
In conclusion, we have the following result.
\begin{proposition} \label{asyspec}
Let $\alpha>0$ fixed. Then when $n$ is large enough, the eigenvalue $\lambda_n^2(\alpha)$ has the following expansion:
\begin{equation}\label{asymtlambdacarre}
\lambda^2_n(\alpha)=\frac{4\pi^2}{L^2}n^2-\frac{\ln^2(3)}{L^2}+i\left(\frac{8\alpha}{3L}+\frac{4\pi \ln(3)}{L^2}n\right)+O\left(\frac{1}{n}\right).
\end{equation}
\end{proposition}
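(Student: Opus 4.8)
The plan is to sidestep the delicate transfer of the small-$\alpha$ coefficients $a,b$ to the large-$n$ regime and instead integrate the flow $\alpha\mapsto\lambda_n(\alpha)$ directly at the level of the square $\lambda_n^2$. The starting point is the uniform leading asymptotic (\ref{asymtlambda}), namely $\lambda_n(\alpha)=\frac{2n\pi}{L}+i\frac{\ln(3)}{L}+O(1/n)$, together with the fact that, by the analyticity of the family $(H_\alpha)_{\alpha}$ and the bound $|\lambda_n'(\alpha)|=O(1/n)$ already obtained, the root $\lambda_n(\alpha)$ stays in an $O(1/n)$ neighbourhood of $\lambda_n(0)$ for all $\alpha$ in a fixed interval $[0,\alpha_0]$; in particular $|\lambda_n(\alpha)|=\frac{2n\pi}{L}+O(1)$ uniformly in $\alpha$.

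The key is to differentiate $\lambda_n^2$ rather than $\lambda_n$. Using the expression for $\lambda_n'(\alpha)$ obtained by differentiating the characteristic equation $h_\alpha(\lambda_n(\alpha))=0$, one forms
\[
(\lambda_n^2)'(\alpha)=2\lambda_n(\alpha)\,\lambda_n'(\alpha)=\frac{8i\,\lambda_n^2(\alpha)}{3L\lambda_n^2(\alpha)+2\alpha L\lambda_n(\alpha)-\alpha^2L+4i\alpha}.
\]
Dividing numerator and denominator by $\lambda_n^2(\alpha)$ and using that $|\lambda_n(\alpha)|$ is large, the denominator becomes $3L\,(1+O(1/n))$, so that
\[
(\lambda_n^2)'(\alpha)=\frac{8i}{3L}\cdot\frac{1}{1+O(1/n)}=\frac{8i}{3L}+O\!\left(\frac{1}{n}\right)
\]
uniformly for $\alpha\in[0,\alpha_0]$. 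I would then integrate this identity from $0$ to $\alpha$, obtaining $\lambda_n^2(\alpha)=\lambda_n^2(0)+\frac{8i\alpha}{3L}+O(1/n)$, and finally substitute the explicit value $\lambda_n^2(0)=\frac{4n^2\pi^2-\ln^2(3)}{L^2}+\frac{4in\pi\ln(3)}{L^2}$ and separate real and imaginary parts to reach (\ref{asymtlambdacarre}). This is consistent with squaring the small-$\alpha$ expansion: since $a=\frac{4i}{3L\lambda_n(0)}$, the cross term $2\lambda_n(0)\,a\,\alpha$ collapses to exactly $\frac{8i\alpha}{3L}$, while the $b\alpha^2$ and $a^2\alpha^2$ contributions are $O(1/n)$.

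The main obstacle is the uniform-in-$n$ control concealed in the two $O(1/n)$ estimates. The bound $|\lambda_n'(\alpha)|=O(1/n)$ was itself derived under the assumption $|\lambda_n(\alpha)|\sim\frac{2n\pi}{L}$, so establishing the leading asymptotic uniformly over $[0,\alpha_0]$ demands a bootstrap: for $n$ large, as long as $\lambda_n(\alpha)$ stays in the region $|\lambda_n|\ge\frac{n\pi}{L}$ the flow speed is at most $C/n$, whence over the fixed interval the root cannot leave this region, which closes the argument and legitimises the uniform expansion of the denominator. I would present this a priori confinement argument first, leaning on the analytic dependence of $\lambda_n^2(\alpha)$ on $\alpha$ furnished by the Kato type $B$ theory already invoked, and only afterwards carry out the integration; the remaining manipulations are routine.
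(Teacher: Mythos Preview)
Your proposal is correct and close in spirit to the paper's argument, but the execution is genuinely cleaner. The paper proceeds in two stages: it first computes the small-$\alpha$ Taylor coefficients $a=\frac{4i}{3L\lambda_n(0)}$ and $b$, then establishes $|\lambda_n'(\alpha)|=O(1/n)$ from the differentiated characteristic equation, deduces $\lambda_n(\alpha)=\lambda_n(0)+O(1/n)$, and finally asserts that ``the asymptotic expansion for large $n$ is obtained in the same way we did for small $\alpha$''---meaning the coefficient $a$ survives as the leading correction, so that upon squaring one picks up $2\lambda_n(0)\cdot a\alpha=\frac{8i\alpha}{3L}$. Your route differentiates $\lambda_n^2$ directly, obtaining $(\lambda_n^2)'(\alpha)=\frac{8i}{3L}+O(1/n)$ in one stroke, and then integrates. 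This bypasses the two-step ``compute $a$, then square'' manoeuvre and makes the appearance of the constant $\frac{8i\alpha}{3L}$ completely transparent; the paper's version, by contrast, leaves the final transfer step somewhat implicit. Your explicit bootstrap (the a~priori confinement of $\lambda_n(\alpha)$ to $|\lambda_n|\ge n\pi/L$ over $[0,\alpha_0]$) is also more carefully stated than in the paper, which simply invokes ``$|\lambda_n(\alpha)|$ behaves like $2n\pi/L$ for large $n$'' without closing the loop. Both arguments rest on the same derivative identity and the same Kato type~$B$ analyticity, so the difference is one of packaging rather than substance.
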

At this stage we define the projection $P_{pp}$   on the closed subspace spanned by the $\varphi^{(2k)}$'s, namely
for any $f\in {\mathcal H}$, we set
\[
P^+_{pp} f=\sum_{k=0}^{+\infty} (f, \varphi^{(2k)})_{{\mathcal H}} \varphi^{(2k)}.
\]

Note that $P^+_{pp} f$ is different from $f$ on $R_2$
because $L^2(R_2)$ is spanned by the set of eigenvectors of the Laplace operator with Dirichlet boundary conditions at $0$ and $L$, that are the set $\{\varphi^{(\ell)}_2\}_{\ell\in \NN^*}$,
where
\[
\varphi^{(\ell)}_2(x)=\frac{\sqrt{2}}{\sqrt{L}}\sin (\lambda_{\ell} x), \forall x\in R_2,
\]
and $\lambda_{\ell} =\frac{\ell\pi}{L}$. Hence
\[
f- P^+_{pp} f=\sum_{k=0}^{+\infty} \left(\int_0^Lf(x)\varphi^{(2k+1)}_2(x)\,dx\right) \varphi^{(2k+1)}_2.
 \]

We show, according to \cite[Corollary 2.6]{amamnic}, that our operator has no singular continuous 
spectrum and $P_{c}f = f -( P_{pp}^++ P_{pp}^-)f, \, \forall\, f \in \mathcal{H}.$

\begin{theorem}\label{lsplitkernel}
For all $z\in \{z\in \CC: \Im z>0, \, \Re z \leq 0\}$, and all $x,y\in  {\mathcal
R}$, the kernel $K(x,y,z^2)$ defined in Theorem \ref{theo1} admits
the decomposition
\be\label{splitkernel} K(x,y,z^2)=K_c(x,y,z^2)+K_{pp}^+(x,y,z^2)+K_{pp}^-(x,y,z^2), \ee
where for $x,y  \in R_2$ and $X = e^{izL}$ we have
$$
  K_c(x,y,z^2) =
	$$
	$$
	-\frac{\sin zy\sin zx}{2iz}+\frac{1}{2iz}\left(\frac{i\alpha+\omega}{i\alpha-\omega}e^{-izy}e^{izx}+\frac{2\omega}{i\alpha-\omega}\left(X -\frac{2(i\alpha+\omega)}{i\alpha-\omega}\right)e^{-izy}e^{-izx}\right),
$$
$$
K_{pp}^-(x,y,z^2) = 
$$
$$
\frac{ iX\sin(zy)e^{-izx}-\sin(zy)\sin zx-\frac{2(i\alpha+\omega)}{(i\alpha-\omega)^2}e^{-izy}e^{izx}-\frac{\omega A(\alpha)}{i\alpha-\omega}e^{-izy}e^{-izx}}{iz(X-\omega_c)}
$$
and
\begin{equation*}
K_{pp}^+(x,y,z^2) = -\frac{\cos\left(\frac{zL}{2}\right)}{2z\sin\left(\frac{zL}{2}\right)}\sin(zy)\sin zx.
\end{equation*}
The function $z  \mapsto K_c(x,y,z^2)$ is continuous on $\Im
z\geq 0, \Re z \leq 0$ except at $z=0$.\\
 The function while $z \mapsto K_{pp}^+(x,y,z^2)$ is
meromorphic in $\CC$ with poles at the points $\lambda_{2k}$, $k\in
\NN^*$ and at $z=0$,\\
The function while $z \mapsto K_{pp}^-(x,y,z^2)$ is
meromorphic in $\CC$ with poles at the points $\lambda_{\alpha, k}$, $k\in
\NN.$ .

For $x \not\in R_2$ or $y \not\in R_2$
we have $K_p(x,y,z^2) = 0$ and $K_c(x,y,z^2)=K(x,y,z^2)$ as defined
in Theorem \ref{theo1}.
\end{theorem}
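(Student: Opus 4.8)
The plan is to reduce the statement to a single algebraic identity on the diagonal block $R_2\times R_2$ and then read off the pole structure from Theorem \ref{lemma1}. The first observation is that among the four kernel blocks \rfb{kernel11}--\rfb{kernel21}, only \rfb{kernel22} involves the coefficients $G_2,G_3,H_2,H_3$, and these are the only coefficients whose denominators retain the factor $(1-e^{izL})$. Writing $\omega=-iz$ and $X=e^{izL}$, the expressions \rfb{sergeg2}--\rfb{sergeh3} show that the common denominator of this block is, up to nonvanishing factors, the product $(1-X)(X-\omega_c)$ with $\omega_c=\frac{3z-\alpha}{z+\alpha}$. The remaining blocks involve only $F_1,F_2,F_3,G_1,H_1$; although $F_3$ carries $D_\alpha(\omega)$ in its denominator, its numerator factor $e^{-\omega L}-1$ cancels the $(1-X)$ in $D_\alpha$, so the sole nontrivial denominator off $R_2\times R_2$ is $(X-\omega_c)$. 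Since the zeros of $X-\omega_c$ are exactly the $\lambda_{\alpha,k}\in\CC_+^+$ by the characteristic equation \rfb{eqcar}, this factor never vanishes on $\{\Im z\geq 0,\ \Re z\leq 0\}$; hence off $R_2\times R_2$ the kernel is already continuous up to the real axis away from $z=0$, which is precisely the assertion $K_p=0$ and $K_c=K$ there.

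For $x,y\in R_2$ I would substitute \rfb{sergeg2}--\rfb{sergeh3} into \rfb{kernel22}, clear denominators, and perform a partial fraction decomposition of the resulting rational function of $X$ relative to its two simple poles $X=1$ and $X=\omega_c$. This produces three groups: a part proportional to $\frac{1}{1-X}$, a part proportional to $\frac{1}{X-\omega_c}$, and a rational remainder whose only surviving denominators are $i\alpha-\omega$ and $z$. I would identify the remainder with $K_c$, the $\frac{1}{X-\omega_c}$ part with $K_{pp}^-$, and the $\frac{1}{1-X}$ part with $K_{pp}^+$. For the last identification one uses the elementary relations $1-X=-2i\,e^{izL/2}\sin\!\big(\tfrac{zL}{2}\big)$ and $\cos\!\big(\tfrac{zL}{2}\big)/\sin\!\big(\tfrac{zL}{2}\big)=i\frac{X+1}{X-1}$, together with $e^{\pm izx}$, $e^{\pm izy}$ bookkeeping, to collapse the $X=1$ residue into the stated form $-\frac{\cos(zL/2)}{2z\sin(zL/2)}\sin(zy)\sin(zx)$; in particular the $x,y$ dependence must organize into the product $\sin(zy)\sin(zx)$, reflecting that the residue at $X=1$ is carried by the Dirichlet sine modes $\varphi_2^{(2k)}$ confined to the loop.

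With the three pieces in hand, the analytic claims follow by inspection. The factor $\sin(zL/2)$ in $K_{pp}^+$ vanishes exactly at $z=2k\pi/L=\lambda_{2k}$, giving a meromorphic function on $\CC$ with the stated poles; the factor $X-\omega_c$ in $K_{pp}^-$ vanishes exactly at the roots of \rfb{eqcar}, namely the $\lambda_{\alpha,k}\in\CC_+^+$; and $K_c$, whose denominators are only $i\alpha-\omega=i(\alpha+z)$ and $2iz$, is continuous on $\{\Im z\geq0,\ \Re z\leq0\}$ except for the pole of the $\frac{1}{2iz}$ prefactor at $z=0$. One must also check that the factor $\alpha+z$ produces no genuine singularity on this region and that the apparent $z=0$ singularities of $K_{pp}^{\pm}$ are removable, so that the $z=0$ pole of the full kernel resides only in $K_c$.

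I expect the main obstacle to be purely computational: clearing the common denominator in \rfb{kernel22} generates a large number of exponential monomials in $x$ and $y$, and the delicate point is to verify that, after extracting the $\frac{1}{1-X}$ residue, the remainder $K_c$ is genuinely regular at every $\lambda_{2k}$ and $\lambda_{\alpha,k}$, i.e.\ that all of the loop's pole structure has been transferred into $K_{pp}^{+}$ and $K_{pp}^{-}$. Concretely, this amounts to confirming that the coefficients $A(\alpha)$, $\frac{2(i\alpha+\omega)}{(i\alpha-\omega)^2}$, and $\frac{i\alpha+\omega}{i\alpha-\omega}$ appearing in the stated forms are exactly those produced by the partial fraction step, which is the heart of the bookkeeping.
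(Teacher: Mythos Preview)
Your proposal is correct and follows essentially the same route as the paper: restrict to the $R_2\times R_2$ block, perform a partial-fraction decomposition of the rational function of $X=e^{izL}$ at its two simple poles $X=1$ and $X=\omega_c$, and use the identity $\frac{1}{X-1}=-\frac12+\frac{\cos(zL/2)}{2i\sin(zL/2)}$ to rewrite the $X=1$ residue. Your treatment of the off-diagonal blocks (noting the cancellation of $1-X$ in $F_3$ and that the remaining factor $X-\omega_c$ has zeros only in $\CC_+^+$) is in fact slightly more explicit than the paper's, which simply asserts that ``$K$ has no poles'' there; one small caveat is that the theorem as stated lists $z=0$ among the poles of $K_{pp}^+$, so your remark that this singularity is removable, while plausible from the displayed formula, goes a bit beyond what you need to verify.
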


\begin{proof}
Let us recall that  
$$
D_\alpha(w)=\left( \frac{\omega-i\alpha}{\omega}\right)e^{\omega L}(1-e^{-\omega L})(e^{-\omega L}-w_c)
$$
with $\omega =-iz$ and $\omega_c=\frac{3\omega+i\alpha}{\omega-i\alpha}.$
 The problem in the decomposition of resolvent kernel $K(x,y,z^2)$  only appears  for $x$ and $y$ in $R_2$, since in the other cases,
$K$ has no poles and therefore in that cases we simply take $K_{pp}^+=K_{pp}^-=0$.
Hence we need to perform this splitting for $x,y\in R_2$ using the formula (\ref{kernel22}).
On one hand we obtain
$$
G_2(\omega)e^{izy}+G_3(\omega)e^{-izy}=\left(\frac{i\alpha+\omega}{i\alpha-\omega}\right)\left(\frac{X+1}{
X-\omega_c}e^{-izy}-\frac{2i}{(X-1)(X-\omega_c)}\sin zy \right).
$$ 
On the other hand we get
$$
H_2(\omega)e^{izy}+H_3(\omega)e^{-izy} =
$$
$$
\frac{1}{X-\omega_c}\left( Xe^{izy}+\frac{i\alpha+\omega}{i\alpha-\omega}\frac{e^{izy}}{X-1}+\frac{2\omega}{i\alpha-\omega}X^2e^{-izy}
-\frac{\omega+i\alpha}{i\alpha-\omega}\frac{X^2}{X-1}e^{-izy} \right)
$$
$$
=\left(\frac{i\alpha+\omega}{i\alpha-\omega}\right)\frac{2i\sin zy}{(X-1)(X-\omega_c)}+\frac{2iX\sin zy}{X-\omega_c}+\frac{2\omega}{i\alpha-\omega}\left(\frac{X^2-X-\frac{i\alpha+\omega}{2\omega}}{X-\omega_c} \right)e^{-izy}.
$$
Now an elementary calculus gives
$$
\frac{X^2-X-\frac{i\alpha+\omega}{2\omega}}{X-\omega_c}=X+\frac{2(\omega+i\alpha)}{\omega-i\alpha}-A(\alpha)\frac{1}{X-\omega_c},
$$
$$
\left(\frac{i\alpha+\omega}{i\alpha-\omega}\right)\frac{2i}{(X-1)(X-\omega_c)}=i\left( \frac{1}{X-1}-\frac{1}{X-\omega_c}\right),
$$
$$
\left(\frac{i\alpha+\omega}{i\alpha-\omega}\right)\frac{X+1}{X-\omega_c}=\frac{i\alpha+\omega}{i\alpha-\omega}-\frac{4\omega(i\alpha+\omega)}{(\omega-i\alpha)^2}\frac{1}{X-\omega_c}.
$$
and
$$
\frac{2iX}{X-\omega_c}=2i+\frac{2i\omega_c}{X-\omega_c},
$$
where we have set 
$$
A(\alpha)=\frac{(\omega+i\alpha)(11\omega^2+\alpha^2+6i\alpha\omega)}{2\omega(\omega-i\alpha)^2}.
$$
Thus, 
\begin{eqnarray*}
Q:=(G_2(\omega)e^{izy}+G_3(\omega)e^{-izy})e^{izx}+(H_2(\omega)e^{izy}+H_3(\omega)e^{-izy})e^{-izx}=\\
\frac{i\alpha+\omega}{i\alpha-\omega}\frac{4\sin zy\sin zx}{(X-1)(X-\omega_c)} +\left(\frac{i\alpha+\omega}{i\alpha-\omega}-\frac{4\omega(i\alpha+\omega)}{(\omega-i\alpha)^2}\frac{1}{X-\omega_c}\right)e^{-izy}e^{izx}\\
+\left(2i+\frac{2i\omega_c}{X-\omega_c}\right)\sin zy e^{-izx} +\left( \frac{2\omega}{i\alpha-\omega}X-\frac{4\omega(\omega+i\alpha)}{(i\alpha-\omega)^2}\right)e^{-izy} e^{-izx}\\
+\frac{2\omega A(\alpha)}{i\alpha-\omega}\frac{1}{X-\omega_c}e^{-izy} e^{-izx}.
\end{eqnarray*}
We decompose the last expression into two terms
$$
K_0(x,y,z):=\frac{i\alpha+\omega}{i\alpha-\omega}\frac{4\sin zy\sin zx}{(X-1)(X-\omega_c)}
$$
and
\begin{eqnarray*}
K_1(x,y,z):=\left(\frac{i\alpha+\omega}{i\alpha-\omega}\right)\left( 1-\frac{4\omega}{i\alpha -\omega}\frac{1}{X-\omega_c}\right)e^{-izy} e^{izx}\\
+\frac{2iX}{X-\omega_c}\sin zy e^{-izx}+\frac{2\omega}{i\alpha-\omega}\left(X-\frac{A(\alpha)}{X-\omega_c} -\frac{2(i\alpha+\omega}{i\alpha-\omega}\right)e^{-izy} e^{-izx}.
\end{eqnarray*}
We go further in the calculus and we find the following splitting of $Q$
$$
K_0(x,y,z)=-i\frac{\cos\left(\frac{zL}{2}\right)}{\sin\left(\frac{zL}{2}\right)}\sin zy\sin zx-\left(1+\frac{2}{X-\omega_c} \right)\sin zy\sin zx.
$$
Here we used the fact that $X=e^{izL}$ and thus 
$$\frac{1}{X-1}=-\frac{1}{2}+\frac{\cos\left(\frac{zL}{2}\right)}{2i\sin\left(\frac{zL}{2}\right)}.
$$ 
Going back to the formula for $Q$ and assembling together the terms with $\displaystyle{\frac{1}{X-\omega_c}}$, then  $Q$ is the sum of three terms 
$$
Q=Q_c+Q_{pp}^++Q_{pp}^-
$$
with 
$$
Q_c=-\sin zy\sin zx+\frac{i\alpha+\omega}{i\alpha-\omega}e^{-izy}e^{izx}+\frac{2\omega}{i\alpha-\omega}\left(X -\frac{2(i\alpha+\omega)}{i\alpha-\omega}\right)e^{-izy}e^{-izx},
$$
$$
Q_{pp}^-=\frac{ 2iX\sin(zy)e^{-izx}-2\sin(zy)\sin zx-\frac{4(i\alpha+\omega)}{(i\alpha-\omega)^2}e^{-izy}e^{izx}-\frac{2\omega A(\alpha)}{i\alpha-\omega}e^{-izy}e^{-izx}}{X-\omega_c},
$$
and
$$
Q_{pp}^+=-i\frac{\cos\left(\frac{zL}{2}\right)}{\sin\left(\frac{zL}{2}\right)}\sin(zy)\sin zx.
$$
\end{proof}

\section{Riesz basis} \label{basis}

\noindent In this section, it is proved that the generalized eigenfunctions of the dissipative operator $iH$ associated to the eigenvalues in $i\Sigma_p^- (H)$ form a Riesz basis of the subspace of $\mathcal{H}$ which they span (denoted by $\mathcal{H}_p^-$). 
To this end, we recall that a sequence $(\Psi_n)_{n\in\NN}$ is a Riesz basis in a Hilbert space $V$ if there exist a Hilbert space $\tilde V,$ an orthonormal basis $(e_n)_{n\in\NN}$ of $\tilde{V}$ and an isomorphism $\Theta : \tilde{V}\longrightarrow V$ such that $\Theta e_n=\Psi_n, \forall  n\in \NN.$

\medskip 
 
We start by computing the eigenfunctions of $iH$ corresponding to the eigenvalues $\lambda_n^2(\alpha).$  So let $n$ and $\alpha $ be fixed. The eigenfunction associated to $\lambda_n^2(\alpha)$ is of the form $\psi^-_n=(\psi^-_{1,n},\psi^-_{2,n})$ with 
$$
\psi^-_{1,n}(x)=A_{1,n}e^{i\lambda_n(\alpha)x}, \quad \text{for}\; x\in[0,+\infty[$$
and
$$ \psi^-_{2,n}=A_{2,n}e^{i\lambda_n(\alpha)x}+ B_{2,n}e^{-i\lambda_n(\alpha)x} , \quad \text{for}\; x\in[0,L]$$
  where $ A_{1,n}, A_{2,n}, B_{2,n}$ are complex constants. 
Since $\psi^-_n$ has to be in the domain of $H$, we obtain the following
\begin{equation}
\psi^-_{1,n}(x)=C_n\frac{4\lambda_n(\alpha)}{\lambda_n(\alpha)+\alpha}e^{i\lambda_n(\alpha)x}
\label{fonctpro1}
\end{equation}
and
\begin{equation}
\psi^-_{2,n}(x)=C_n\left( e^{i\lambda_n(\alpha)x}+\frac{3\lambda_n(\alpha)}{\lambda_n(\alpha)+\alpha}e^{-i\lambda_n(\alpha)x}\right)
\label{fonctpro2}
\end{equation}
where $C_n$ is a constant such that $\|\psi^-_n\|_{L^2}=1.$
\begin{theorem} \label{rieszbasis}[Riesz basis for the operator $iH$] 
The generalized eigenfunctions of $iH$ forms a Riesz basis of $\mathcal{H}_p^-.$
\end{theorem}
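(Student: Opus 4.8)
The plan is to reduce everything to the classical characterization that a sequence is a Riesz basis of its closed linear span if and only if it is a \emph{Riesz sequence}, i.e. there exist constants $A,B>0$ with
\[
A\sum_n |c_n|^2 \;\le\; \Big\|\sum_n c_n\,\psi^-_n\Big\|_{\mathcal H}^2 \;\le\; B\sum_n |c_n|^2
\]
for every finitely supported scalar sequence $(c_n)$. Since $\mathcal H_p^-$ is by definition the closed span of the $\psi^-_n$, proving this two-sided inequality is exactly what is needed, and it sidesteps having to compare the spans of two different families. I would first record, from Proposition \ref{asyspec} and the explicit formulas (\ref{fonctpro1})--(\ref{fonctpro2}), that after normalization the scalars $C_n$, $\tfrac{3\lambda_n(\alpha)}{\lambda_n(\alpha)+\alpha}$ and $\tfrac{4\lambda_n(\alpha)}{\lambda_n(\alpha)+\alpha}$ are bounded above and below and converge (to $C$, $3$, $4$) at rate $O(1/n)$, and that $\lambda_n(\alpha)=\tfrac{2n\pi}{L}+i\tfrac{\ln(3)}{L}+O(1/n)$ lies in a horizontal strip $0<a\le\Im\lambda_n(\alpha)\le b$ with uniformly separated real parts (gaps tending to $2\pi/L$). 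The same asymptotics show the $\lambda^2_n(\alpha)$ are simple and distinct for $n$ large, so for all but finitely many indices the generalized eigenfunctions are precisely the $\psi^-_n$; the finitely many exceptional Jordan blocks give a finite-dimensional contribution to be absorbed at the end.

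For the \textbf{upper (Bessel) bound} I would treat the two graph components separately, writing $\|\sum c_n\psi^-_n\|_{\mathcal H}^2=\|\sum c_n\psi^-_{1,n}\|_{L^2(R_1)}^2+\|\sum c_n\psi^-_{2,n}\|_{L^2(R_2)}^2$. On the finite interval $R_2$ the families $\{e^{i\lambda_n(\alpha)x}\}$ and $\{e^{-i\lambda_n(\alpha)x}\}$ are Bessel because their frequencies have finite upper density and bounded imaginary part; on $R_1=(0,+\infty)$ the family $\{e^{i\lambda_n(\alpha)x}\}$ is Bessel since $\Im\lambda_n(\alpha)\ge a>0$. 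As the coefficients are bounded, adding the two contributions yields $B$.

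For the \textbf{lower bound} I would discard the (nonnegative) $R_2$ contribution and work only on the tail $R_1$, where $\psi^-_{1,n}(x)=d_n e^{i\lambda_n(\alpha)x}$ with $d_n:=C_n\tfrac{4\lambda_n(\alpha)}{\lambda_n(\alpha)+\alpha}$ bounded below in modulus. It then suffices to prove that $\{e^{i\lambda_n(\alpha)x}\}_n$ is a Riesz sequence in $L^2(0,+\infty)$, which is the crux. Using the standard unitary identification of $L^2(0,+\infty)$ with the Hardy space $H^2$ of the upper half-plane, $e^{i\lambda_n(\alpha)x}$ corresponds, up to the bounded factor $(2\Im\lambda_n(\alpha))^{-1/2}$, to the reproducing kernel at $\lambda_n(\alpha)$. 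Because $\Im\lambda_n(\alpha)\to \ln(3)/L$ is asymptotically constant and the real parts are separated, the sequence satisfies the Carleson condition $\inf_n\prod_{m\ne n}\big|\tfrac{\lambda_n-\lambda_m}{\lambda_n-\overline{\lambda_m}}\big|>0$; indeed the product reduces asymptotically to the convergent positive product $\prod_{k\ne0}\tfrac{(2\pi k/L)^2}{(2\pi k/L)^2+4(\ln 3/L)^2}$. By the Shapiro--Shields theorem the normalized kernels form a Riesz sequence, and since the normalizations are bounded above and below so is $\{e^{i\lambda_n(\alpha)x}\}$. Applying its lower bound to the coefficients $c_nd_n$ gives $A$, completing the two-sided estimate and hence the Riesz basis property.

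The main obstacle is precisely this last step: one must verify the interpolation condition for the true eigenvalues $\lambda_n(\alpha)$ rather than for the idealized points $\tfrac{2n\pi}{L}+i\tfrac{\ln 3}{L}$, so the $O(1/n)$ corrections from Proposition \ref{asyspec} have to be controlled uniformly inside the infinite product. A more self-contained alternative, avoiding Hardy-space theory, would instead show directly that the Gram matrix $G_{nm}=\langle e^{i\lambda_n(\alpha)x},e^{i\lambda_m(\alpha)x}\rangle_{L^2(R_1)}$ is a boundedly invertible perturbation of the explicit positive-definite Toeplitz matrix $\big(\tfrac{1}{2\beta-i2\pi(n-m)/L}\big)_{n,m}$ with $\beta=\ln(3)/L$, whose symbol can be summed in closed form and shown to be bounded away from $0$ and $\infty$; the perturbation, which has rapidly decaying off-diagonal, is then small in operator norm on a cofinite set of indices, while the remaining finite block (together with the finitely many possible Jordan contributions) is handled by the linear independence of eigenvectors for distinct eigenvalues.
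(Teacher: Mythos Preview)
Your approach is correct and takes a genuinely different route from the paper. The paper works almost entirely on the loop $R_2$: it first proves the off-diagonal Gram estimate $|\langle\psi^-_{2,n},\psi^-_{2,m}\rangle|\le C/(\langle n\rangle(m-n))$ directly from the asymptotics (\ref{asymtlambda}), deduces from it that the synthesis map $\Theta:(a_n)\mapsto\sum a_n\Psi^-_n$ is bounded on $\ell^2$, and then argues \emph{injectivity} of $\Theta$ by a biorthogonality trick---pairing against eigenfunctions of the adjoint $H^*$ kills all but finitely many coefficients, and the remaining finite system is disposed of by a Vandermonde argument. You, by contrast, split the two edges and extract the decisive \emph{lower} Riesz bound from the half-line $R_1$ alone, via the Carleson interpolation condition and the Shapiro--Shields characterization of Riesz sequences of reproducing kernels in $H^2(\mathbb C_+)$ (or, alternatively, the explicit Toeplitz Gram matrix on $R_1$). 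What your route buys is a genuine two-sided inequality: the paper, as written, only establishes boundedness and injectivity of $\Theta$, which by itself does not yield a closed range or a bounded inverse, so your argument is in fact the more complete of the two. The price is that you import nontrivial Hardy-space interpolation theory, whereas the paper's computations are elementary. Your caveat about controlling the $O(1/n)$ corrections inside the Blaschke product is the right place to be careful; the standard stability of the Carleson condition under $\ell^1$-small perturbations of a uniformly separated sequence in a fixed horizontal strip handles it, but it does need to be said.
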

We recall that $\lambda_n(\alpha)$ and $\lambda_n^2(\alpha)$ have the asymptotic expansions given in (\ref{asymtlambda}) and (\ref{asymtlambdacarre}). Thus one can prove the following behavior of the family $\left(\psi^-_{2,n} \right)_n.$
\begin{lemma}
There exists a constant $C>0$ such that for all $n, m\in\NN $ with $n<m$ we have
\begin{equation}
<\psi^-_{2,n},\psi^-_{2,m}>_{L^2(0,L)}\leq \frac{C}{<n>(m-n)}, \quad where \; <n>=(1+n^2)^{1/2}.
\label{lemma}
\end{equation}
\end{lemma}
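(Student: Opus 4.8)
The plan is to reduce the inner product to four elementary exponential integrals and then to track the cancellations that produce the decay. Writing $\psi^-_{2,n}(x)=C_n\bigl(e^{i\lambda_n x}+c_n e^{-i\lambda_n x}\bigr)$ with $c_n=\frac{3\lambda_n}{\lambda_n+\alpha}$, $\lambda_n=\lambda_n(\alpha)=\sigma_n+i\mu_n$, and expanding $\overline{\psi^-_{2,m}}$, the product $\psi^-_{2,n}\,\overline{\psi^-_{2,m}}$ becomes a sum of four terms of the type $e^{\pm i(\lambda_n\mp\overline{\lambda_m})x}$ with bounded coefficients. First I would record that the normalisation $\|\psi^-_n\|_{L^2}=1$, together with the asymptotics $\lambda_n=\frac{2n\pi}{L}+i\frac{\ln 3}{L}+O(1/n)$ from (\ref{asymtlambda}), forces $|C_n|$ to remain bounded (indeed $|C_n|\asymp 1$); hence the prefactor $|C_n\overline{C_m}|$ is harmless and the whole estimate is governed by the four integrals $\int_0^L e^{i\beta x}\,dx=\frac{e^{i\beta L}-1}{i\beta}$.

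The two integrals carrying the frequency $\beta=\lambda_n-\overline{\lambda_m}$ are the dangerous ones: since $\Re\beta=\sigma_n-\sigma_m=\frac{2\pi(n-m)}{L}+O(1/n)$, one has $|\beta|\geq|\Re\beta|\gtrsim m-n$, which already delivers a factor $\tfrac1{m-n}$. The two integrals carrying the frequency $\gamma=\lambda_n+\overline{\lambda_m}$ are comparatively mild: here $\Re\gamma\sim\frac{2\pi(n+m)}{L}$ is large, while the numerator $e^{i\gamma L}-1$ is $O(1/n)$, because $e^{i(\Re\gamma)L}=1+O(1/n)$ and $\Im\gamma=\mu_n-\mu_m=O(1/n)$ (both $\mu_n,\mu_m\to\frac{\ln 3}{L}$); these ``far'' terms are therefore $O\!\left(\tfrac{1}{n(n+m)}\right)\le\frac{C}{\langle n\rangle(m-n)}$ and cause no trouble. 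So the whole difficulty is concentrated in the two resonant terms.

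For the resonant terms I would put them over the common denominator and use the characteristic relation $e^{i\lambda_n L}=3-\frac{4\alpha}{\lambda_n+\alpha}$ from (\ref{thespectrum}), which also rewrites $c_n=e^{i\lambda_n L}+\frac{\alpha}{\lambda_n+\alpha}=T_n+O(1/n)$ with $T_k:=e^{i\lambda_k L}$. Combining the two resonant contributions, and if needed pairing them with the far terms over the single denominator $\lambda_n^2-\overline{\lambda_m}^2=\beta\gamma$, the numerator collapses (modulo $O(1/n)$ corrections) to an expression of the form $\lambda_n(\overline{T_m}+1)(T_n-1)+\overline{\lambda_m}(\overline{T_m}-1)(T_n+1)$. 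Dividing this by $\beta\gamma$, whose modulus is $\gtrsim(m-n)(n+m)$, is what must produce the claimed bound.

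The hard part is precisely this last step: extracting the extra factor $\frac1{\langle n\rangle}$ beyond the obvious $\frac1{m-n}$. A crude estimate of the resonant numerator is only $O(1)$, which gives merely $\frac1{m-n}$, so a genuine cancellation has to be exhibited. I expect the gain to come from feeding the refined asymptotics $\lambda_n=\frac{2n\pi}{L}+i\frac{\ln 3}{L}+O(1/n)$ and the induced expansion of $T_n$ into the collapsed numerator, so that the two leading pieces partly cancel and leave a remainder controlled by $\frac{n+m}{\langle n\rangle}$ relative to the denominator $\beta\gamma$. Establishing this cancellation \emph{uniformly} in $n<m$ — rather than the mere boundedness of each piece — is, in my view, the crux of the argument.
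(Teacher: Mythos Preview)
Your decomposition is exactly the one the paper uses: write $\psi^-_{2,n}=C_n(\phi_n+c_n\tilde\phi_n)$ with $\phi_n(x)=e^{i\lambda_n x}$, $\tilde\phi_n(x)=e^{-i\lambda_n x}$, expand $\langle\psi^-_{2,n},\psi^-_{2,m}\rangle$ into the four elementary integrals, and treat the ``far'' pair (frequency $\lambda_n+\bar\lambda_m$) and the ``resonant'' pair (frequency $\lambda_n-\bar\lambda_m$) separately. Your estimates on the normalisation $|C_n|\asymp 1$ and on the far terms (where $e^{i(\lambda_n+\bar\lambda_m)L}-1=O(1/n)$ because $(\lambda_n+\bar\lambda_m)L=2\pi(n+m)+O(1/n)$) match the paper's exactly.

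Where you and the paper diverge is precisely at the step you flag as the crux. The paper does \emph{not} invoke the characteristic relation $e^{i\lambda_n L}=3-\tfrac{4\alpha}{\lambda_n+\alpha}$, nor does it put things over the common denominator $\beta\gamma$ and analyse a collapsed numerator. Its device is simply to add the two resonant integrals:
\[
\langle\phi_n,\phi_m\rangle+\langle\tilde\phi_n,\tilde\phi_m\rangle
=\frac{e^{i\beta L}-1}{i\beta}+\frac{e^{-i\beta L}-1}{-i\beta}
=\frac{2\sin(\beta L)}{\beta},\qquad \beta=\lambda_n-\bar\lambda_m,
\]
and then assert, on the strength of the asymptotic $\lambda_n-\bar\lambda_m=(n-m)\tfrac{2\pi}{L}+O(1/n)$, that $|\sin(\beta L)|=O(1/n)$, giving $|\langle\phi_n,\phi_m\rangle+\langle\tilde\phi_n,\tilde\phi_m\rangle|\le\frac{C}{\langle n\rangle(m-n)}$. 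The cross pair is handled identically. The final regrouping with the coefficients $c_n,c_m$ is dispatched by $c_n=3+O(1/n)$.

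So the missing idea in your outline is much more modest than the machinery you are setting up: it is just the \emph{symmetric pairing} $\langle\phi_n,\phi_m\rangle+\langle\tilde\phi_n,\tilde\phi_m\rangle$, together with the claim that $\beta L$ is within $O(1/n)$ of $2\pi\ZZ$. One word of caution, though: the paper's stated asymptotic $\lambda_n-\bar\lambda_m=(n-m)\tfrac{2\pi}{L}+O(1/n)$ drops the contribution $\Im(\lambda_n-\bar\lambda_m)=\mu_n+\mu_m$, which according to \eqref{asymtlambda} tends to $\tfrac{2\ln 3}{L}\ne 0$ rather than to zero. Taken at face value this would leave $\sin(\beta L)$ bounded away from $0$ and the pairing would only recover $O(1/(m-n))$ --- the very obstacle you identified. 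In other words, the paper's mechanism for producing the extra $1/\langle n\rangle$ is the pairing, but the justification given for that pairing rests on an asymptotic that appears inconsistent with \eqref{asymtlambda}; your instinct that a more careful cancellation argument is needed here is well founded.
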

\begin{proof}
We consider the functions $\phi_n(x)=e^{i\lambda(\alpha)x}$ and $\tilde{\phi}_n(x)=e^{-i\lambda(\alpha)x}, \; x\in[0,L].$
Using  (\ref{asymtlambda}) we obtain 
$$
\|\phi_n\|_{L^2(0,L)}^2=\|\tilde{\phi}_n\|_{L^2(0,L)}^2=\frac{4L}{\ln(3)}+O \left(\frac{1}{n} \right), \; 
$$
and
$$
<\phi_n,\tilde{\phi}_n>_{L^2(0,L)}=O \left(\frac{1}{n} \right).
$$
Indeed, for $n\neq m,$ we obtain
$$
<\phi_n,\phi_m>_{L^2(0,L)}=\frac{e^{i(\lambda_n-\bar\lambda_m)L}-1}{\lambda_n-\bar\lambda_m}
$$
and
$$
<\tilde{\phi}_n,\tilde{\phi}_m>_{L^2(0,L)}=\frac{e^{-i(\lambda_n-\bar\lambda_m)L}-1}{\bar\lambda_m-\lambda_n}.
$$
We know that $\lambda_n-\bar\lambda_m=(n-m)\nu+O(\frac{1}{n})$ for $n$ large, with $_nu=\frac{2\pi}{L}$. Hence 
$$
|<\phi_n,\phi_m>|+|<\tilde{\phi}_n,\tilde{\phi}_m>|\le \frac{1}{m-n}, 
$$
and 
$$
|<\phi_n,\phi_m>+<\tilde{\phi}_n,\tilde{\phi}_m>|\le \frac{1}{<n>(m-n)}.
$$
By the same arguments we get 
$$
|<\phi_n,\tilde\phi_m>|+|<\tilde{\phi}_n,{\phi}_m>|\le \frac{1}{m+n}, 
$$
and 
$$
|<\phi_n,\tilde\phi_m>+<\tilde{\phi}_n,{\phi}_m>|\le \frac{1}{<n>(m+n)}.
$$
Going back to $\psi_{2,.}^-$ we write $\psi_{2,n}^-=C_n\left( \phi_n+\frac{3\lambda_n(\alpha)}{\lambda_n(\alpha)+\alpha}\tilde\phi_n\right)$ and we obtain
\begin{eqnarray*}
\left|<\psi^-_{2,n},\psi^-_{2,m}>\right|&=&\left|C_n\bar{C}_m\left(<\phi_n,\phi_m>+ <\phi_n,\frac{3\lambda_m(\alpha)}{\lambda_m(\alpha)+\alpha}\tilde\phi_m>\right.\right.\\
&+&\left.\left.\frac{3\lambda_n(\alpha)}{\lambda_n(\alpha)+\alpha}<\tilde\phi_n,\phi_m>+\frac{3\lambda_n(\alpha)}{\lambda_n(\alpha)+\alpha}\frac{3\bar\lambda_m(\alpha)}{\bar\lambda_m(\alpha)+\alpha}<\tilde{\phi}_n,\tilde{\phi}_m>\right)\right|.
\label{eq:}
\end{eqnarray*}
We conclude using the asymptotic $|\lambda_n(\alpha)|=\frac{2n\pi}{L}+O(\frac{1}{n})$ for $n$ large enough.
\end{proof}

\begin{proof}[Proof of Theorem \ref{rieszbasis}] 
Consider now the map $\Theta : \ell^2(\CC)\rightarrow L^2(0,+\infty)\times L^2(0,L)$ given by 
$$
\Theta\left( (a_n)_{n\in\NN}\right)=\sum_{n=0}^{+\infty}a_n\Psi^-_n.
$$
We claim that $\Theta$ is an isomorphism from $\ell^2(\CC)$ to $V_p:=\overline{Sp(\Psi^-_n)}$ where $\Psi^-_n=(\psi^-_{1,n},\psi^-_{2,n}).$
Using the relations (\ref{fonctpro1}) and (\ref{fonctpro2}), it is enough to consider the functions $\phi_n$ and $\tilde{\phi}_n.$
Let $a=(a_n)_{n\in\NN}\in \ell^2(\CC).$ For $N, p$ fixed in $\NN$ we have 
\begin{eqnarray}
\left\| \sum_{n=N}^{N+p}a_n\phi_n\right\|^2 -\sum_{n=N}^{N+p}|a_n|^2 &=& \sum_{j=N}^{N+p}\sum_{k=j+1}^{N+p}2 \Re\left( a_j\bar{a_k}<\phi_j,\phi_k>\right)\\
&\leq& 2 C\sum_{j=0}^{+\infty}\frac{|a_j|}{<j>}\sum_{k=1}^{+\infty}\frac{|a_{k+j}|}{k}\nonumber\\
&\leq & \tilde{C}\|a\|^2_{\ell^2(\CC)}.
\label{eq:bis}
\end{eqnarray}
where $C$ is the constant given in (\ref{lemma}) and $\tilde{C}>0.$ This proves that the series $ \ds \sum_{n=0}^{+\infty}a_n\phi_n$ converges in $L^2(0,L)$ as $a=(a_n)_n\in \ell^2(\CC).$ Moreover, letting $p\rightarrow +\infty$ and taking $N=0$ give
$$
\left\| \sum_{n=0}^{+\infty}a_n\phi_n\right\|^2\leq \tilde{C}\|a\|^2_{\ell^2(\CC)}.
$$
Thus $\Theta $ is continuous. It remains to prove that $\Theta$ is injective. For this, let $a\in\ell^2(\CC)$ and assume that $ \ds \sum_{n=0}^{+\infty}a_n\phi_n=0$ in $L^2(0,L).$
Let $ n\ne m\in \NN.$ First, we have
\begin{eqnarray*}
\lambda_n^2(\alpha)<\phi_n,\tilde{\phi}_m>= <H\phi_n,\tilde{\phi}_m>\\
&= &<\phi_n,H^*\tilde{\phi}_m>\\
&=&<\phi_n,\lambda_m^2(\alpha)\tilde{\phi}_m>\\
&=&\lambda_m^2(\alpha)<\phi_n,\tilde{\phi}_m>.
\end{eqnarray*}
This proves that $<\phi_n,\tilde{\phi}_m>=0$ for $n\neq m.$ 
Next, we can see from (\ref{fonctpro1}, \ref{fonctpro2}) that $<\phi_n,\tilde{\phi_n}>\neq 0$ for $n> N$ with $N$ large enough. Hence, taking the inner product of $ \ds \sum_{n=0}^{+\infty}a_n\phi_n=0$ by $\tilde{\phi_p}$ with $p$ fixed yields $0= \ds \sum_{n=0}^Na_n \phi_n=0$ in $L^2(0,L).$ Applying the operator $H$ iteratively $N$ times gives the following sequence of relations:
$$
0=\sum_{n=0}^N \lambda^{2k}_n(\alpha)a_n\phi_n=0, \quad \text{in}\; L^2(0,L), \; \text{for}\; k=0,\dots, N.
$$
This is a Vendermonde problem. Since the $\lambda_n^{2k}$ are pairwise different, we deduce that $a_n\phi_n=0$ and hence $a_n=0$ for all $n.$ 
\end{proof}

\section{Energy decreasing} \label{secenergy}

Using the Riesz basis constructed in the latest section, the energy is proved to decrease exponentially to a non-vanishing value depending on the initial datum. The decay rate is explicitly given at the end of Theorem \ref{energy} below since the $\lambda$'s satisfying $i \lambda^2 \in i \, \Sigma_p^- (H)$. The decay rate is computable numerically. 

\subsection{Energy decreasing using the Riesz basis}

\begin{theorem} \label{energy}[Energy decreasing of the solution]
Let $E(t) := \frac{1}{2} \, \left\|u(t)\right\|^2_{\mathcal{H}}$ be the energy, $\mathcal{H}_p^-$ (respectively $\mathcal{H}^+_p$) be the subspace of $\mathcal{H}$ spanned by the $\psi^-(\lambda, \cdot)$'s (resp. $\psi^+(\omega, \cdot)$'s), which are the normalized (in $\mathcal{H}$) eigenfunctions of $iH$ associated to the eigenvalues $i \lambda^2$ in $i \Sigma_p^- (H)$ (resp. $i \Sigma_p^+ (H)$). 

\begin{enumerate}
\item $\mathcal{H}_p^-$ is orthogonal to $\mathcal{H}^+_p$.  
\item Let $u_0$ in $\mathcal{H}_p^- \oplus \mathcal{H}_p^+$ be the initial condition of the boundary value problem given in the introduction and $u_0^+$ its orthogonal projection onto $\mathcal{H}_p^+$. \\
Then $E(t)$ decreases exponentially to $E^+(0):= \frac{1}{2} \, \|u_0^+ \|_{\mathcal{H}}^2$ when $t$ tends to $+ \infty$. More precisely

\be \label{optimaldecay}
E(t) = E^+ (t) + E^-(t) = E^+(0) + E^-(t) \leq E^+(0) + e^{-2\omega t} E^-(0)
\ee 
where 
$$-\omega:= \sup_{ \{ i \lambda^2 \in i\Sigma_p^- (H)\} } \Re(i \lambda^2) < 0.$$ 
\end{enumerate}
\end{theorem}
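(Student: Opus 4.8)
The plan is to prove the two assertions in turn, treating first the orthogonality of the two invariant subspaces and then the splitting and decay of the energy along the flow.

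For assertion (1) I would exploit the special structure of the confined modes. The functions $\varphi^{(2k)}$ spanning $\mathcal{H}_p^+$ vanish identically on $R_1$ and vanish at the junction ($x=0$ and $x=L$ on $R_2$), so the dissipative term $i\alpha u_1(0)$ in \rfb{t1} is inactive on them; consequently each $\varphi^{(2k)}$ lies simultaneously in $\mathcal{D}(H)$ and $\mathcal{D}(H^*)$ and satisfies $H^*\varphi^{(2k)}=\lambda_{2k}^2\varphi^{(2k)}$ with the \emph{real} eigenvalue $\lambda_{2k}^2$ (the sign of $i\alpha$ in the adjoint boundary condition is irrelevant since $\varphi^{(2k)}_1(0)=0$). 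Then, for any eigenfunction $\psi_n^-$ with eigenvalue $\lambda_n^2(\alpha)\in\Sigma_p^-(H)$,
\[
\lambda_n^2(\alpha)\,\langle\psi_n^-,\varphi^{(2k)}\rangle_{\mathcal H}=\langle H\psi_n^-,\varphi^{(2k)}\rangle_{\mathcal H}=\langle\psi_n^-,H^*\varphi^{(2k)}\rangle_{\mathcal H}=\lambda_{2k}^2\,\langle\psi_n^-,\varphi^{(2k)}\rangle_{\mathcal H},
\]
so $(\lambda_n^2(\alpha)-\lambda_{2k}^2)\langle\psi_n^-,\varphi^{(2k)}\rangle_{\mathcal H}=0$, which forces $\langle\psi_n^-,\varphi^{(2k)}\rangle_{\mathcal H}=0$ because $\lambda_n^2(\alpha)$ is non-real (Theorem \ref{lemma1} and Remark (2)) while $\lambda_{2k}^2\in\RR$. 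Passing to the closed spans gives $\mathcal{H}_p^-\perp\mathcal{H}_p^+$.

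For assertion (2) I would first observe that both subspaces are invariant under $(e^{itH})_{t\geq0}$, being closed spans of eigenvectors, so writing $u_0=u_0^-+u_0^+$ yields $u(t)=u^-(t)+u^+(t)$ with $u^\pm(t)\in\mathcal{H}_p^\pm$; by assertion (1) the cross terms vanish and $E(t)=E^+(t)+E^-(t)$. On $\mathcal{H}_p^+$ the $\varphi^{(2k)}$ form an orthonormal family and $e^{itH}\varphi^{(2k)}=e^{it\lambda_{2k}^2}\varphi^{(2k)}$ with $\lambda_{2k}^2\in\RR$, so each factor has modulus one, $\|u^+(t)\|_{\mathcal H}=\|u_0^+\|_{\mathcal H}$, and $E^+(t)=E^+(0)$; this already identifies the limiting value of the energy. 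For the decaying part I would use the Riesz basis of Theorem \ref{rieszbasis}: expanding $u_0^-=\sum_n a_n\psi_n^-$, the flow acts diagonally, $u^-(t)=\sum_n a_n e^{it\lambda_n^2(\alpha)}\psi_n^-$, and each coefficient is damped by $|e^{it\lambda_n^2(\alpha)}|=e^{t\Re(i\lambda_n^2(\alpha))}\leq e^{-\omega t}$ with $-\omega=\sup_n\Re(i\lambda_n^2(\alpha))$. Through the isomorphism $\Theta$ of Theorem \ref{rieszbasis} the restricted semigroup is similar to the diagonal semigroup $\mathrm{diag}\bigl(e^{it\lambda_n^2(\alpha)}\bigr)$ on $\ell^2(\CC)$, whose norm is $\sup_n|e^{it\lambda_n^2(\alpha)}|=e^{-\omega t}$, and transporting back gives $\|u^-(t)\|_{\mathcal H}^2\leq e^{-2\omega t}\|u_0^-\|_{\mathcal H}^2$, i.e. $E^-(t)\leq e^{-2\omega t}E^-(0)$, which is \rfb{optimaldecay}.

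The step I expect to be the main obstacle is this last estimate, in two respects. First, the strict positivity $\omega>0$ must hold uniformly over \emph{all} indices, not merely asymptotically: I would secure it by combining Remark (2), which keeps the imaginary parts bounded away from $0$, with the growth $\Im\lambda_n^2(\alpha)\to+\infty$ of Proposition \ref{asyspec}, so that $\sup_n\Re(i\lambda_n^2(\alpha))$ is attained and strictly negative (with leading value $\tfrac{8\alpha}{3L}$ as in Theorem \ref{mainresult1}). Second, obtaining the clean constant $1$ rather than the Riesz condition number $\|\Theta\|\,\|\Theta^{-1}\|$ is delicate, since $(\psi_n^-)$ is a Riesz basis but not orthonormal; the naive two-sided frame bounds only give $C=\|\Theta\|\,\|\Theta^{-1}\|\geq1$ in front of $e^{-2\omega t}$ (precisely the constant of Theorem \ref{mainresult1}), so reaching the sharp inequality requires either an orthogonalization compatible with the diagonal action or a direct Lyapunov estimate built on the dissipation identity \rfb{energid}. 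This quantitative control of the basis constants, together with the uniform gap, is the genuine technical core of the argument.
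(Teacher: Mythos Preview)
Your approach is essentially the same as the paper's: for part (1) you pass to the adjoint on the confined modes and use that the eigenvalues in $\Sigma_p^-(H)$ are non-real while those in $\Sigma_p^+(H)$ are real; for part (2) you split $u_0$ along the two invariant subspaces, note that the $\Sigma_p^+$ component is isometric, and estimate the $\Sigma_p^-$ component via the diagonal action on the Riesz basis. Two points of comparison are worth recording.

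First, the paper's orthogonality proof also treats the case of generalized eigenfunctions of order $p\ge 2$ (Jordan blocks) by an induction on the order, setting $\psi=(iH-i\lambda'^2)\psi^-(\lambda',\cdot)$ and reducing to the previous step. You only handle genuine eigenfunctions; if you want the argument to cover the full generalized eigenspace structure announced in Theorem~\ref{rieszbasis}, you should add this induction.

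Second, you are right to flag the constant in \rfb{optimaldecay} as the delicate point. The paper's own proof simply writes
\[
E^-(t)=\tfrac12\sum_{i\lambda^2\in i\Sigma_p^-(H)}\|u_0^-(\lambda,\cdot)\|_{\mathcal H}^2\,|e^{i\lambda^2 t}|^2
\]
and reads off $E^-(t)\le e^{-2\omega t}E^-(0)$; but this Parseval-type identity presupposes orthonormality of the $\psi^-_n$, whereas Theorem~\ref{rieszbasis} only gives a Riesz basis. Your observation that the frame bounds yield $E^-(t)\le \|\Theta\|^2\|\Theta^{-1}\|^2 e^{-2\omega t}E^-(0)$, matching the constant $C$ of Theorem~\ref{mainresult1}, is in fact sharper than what the paper's written argument justifies; obtaining the constant $1$ in \rfb{optimaldecay} would require the additional work you outline.
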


\begin{proof} 
First part: Above all, it is easy to see that the operator $(iH)^*$ is obtained by changing $i$ by $-i$ in $iH.$ Thus, if $iH\psi^+(\lambda,.)=i\lambda^2 \psi^+(\lambda,.),$ then $(iH)^* \psi^+(\lambda,.) =- i \lambda^2 \psi^+(\lambda,.).$ 

\medskip

Now, to prove that $\mathcal{H}^+_p$ is orthogonal to $\mathcal{H}^-_p$, it suffices to check that any generalized 
eigenfunction $\psi^-(\lambda',.)$ of $\mathcal{H}_p^-$ is orthogonal to any eigenfunction $\psi^+(\lambda,.)$ of $\mathcal{H}_p^+.$ 

\medskip

First we assume that $\psi^-_p(\lambda',.)$ is an eigenfunction, i.e $$iH \psi^-(\lambda',.)= i\lambda'^2 \psi^-(\lambda',.).$$ Therefore, since $i \lambda^2$ is purely imaginary,  

$$\begin{array}{lll}
i \lambda'^2 <\psi^-(\lambda',.),\psi^+(\lambda,.)>_{\mathcal{H}} &=& <iH \psi^-(\lambda',.),\psi^+(\lambda,.)>_{\mathcal{H}}\\
&=&< \psi^-(\lambda',.), (iH)^* \psi^+(\lambda,.)>_{\mathcal{H}}\\
&=&-< \psi^-(\omega',.), iH \psi^+(\lambda,.)>_{\mathcal{H}}\\
&=&-\overline{i\lambda^2}<\psi^-(\lambda',.),\psi^+(\omega,.)>_{\mathcal{H}}\\
&=&i\lambda^2 <\psi^-(\lambda',.),\psi^+(\lambda,.)>_{\mathcal{H}}.
\end{array}
$$

Consequently $<\psi^-(\lambda',.),\psi^+(\lambda,.)>_{\mathcal{H}}=0.$

Secondly, we assume that $\lambda'$ is not simple. Let $\psi^2(\omega',.)$ be an associated generalized eigenfunction of order $p\geq 2,$ in the sense that

$$(iH-i\lambda'^2)^p \psi^-(\lambda',.)=0,\;(iH-i\lambda'^2)^{p-1} \psi^-(\lambda',.)\neq 0.$$

Setting $\psi=(iH-i\lambda'^2) \psi^-(\lambda',.),$ then $\psi$ is a generalized eigenfunction associated to $i\lambda'^2$ of order $p-1,$ so arguing by iteration with respect to the order $p$ we can assume that $<\psi,\psi^+(\lambda,.)>_{\mathcal{H}}=0.$ 

\medskip

Therefore
$$
\begin{array}{lll}
i\lambda'^2 <\psi^-(\lambda',.),\psi^+(\lambda,.)>_{\mathcal{H}}&=& <iH\psi^-(\lambda',.)+\psi,\psi^+(\lambda,.)>_{\mathcal{H}}  \\
&=& <iH\psi^-(\lambda',.),\psi^+(\lambda,.)>_{\mathcal{H}}\\
&=&i\lambda^2 <\psi^-(\lambda',.),\psi^+(\lambda,.)>_{\mathcal{H}},
\end{array}
$$
as previously. Consequently $<\psi^-(\lambda',.),\psi^+(\lambda,.)>_{\mathcal{H}}=0.$ 

\medskip

\noindent Second part: the purely point spectrum of the dissipative operator $iH$ is the union of $i \Sigma^+_p (H)$ set of the purely imaginary eigenvalues and $i\Sigma_p^- (H)$ set of the other eigenvalues. 

\medskip

The initial condition $u_0$ is written as a sum of two terms: 

$$
u_0 := \sum_{i \lambda^2 \in i\Sigma_p^+(H)} u^+_0(\lambda, \cdot) \psi^+(\lambda, \cdot) + \sum_{i \lambda^2 \in \Sigma_p^- (H)} u^-_0(\lambda, \cdot) \psi^-(\lambda, \cdot)
$$

where $\psi^+(\lambda, \cdot)$ (respectively $\psi^-(\lambda, \cdot)$) is a normalized (in $\mathcal{H}$) eigenfunction of $iH$ associated to the eigenvalue $i \lambda^2$ in $i\Sigma_p^+(H)$ (resp. $i\Sigma_p^-(H)$). Note that the sum takes into account the multiplicities of the eigenvalues here. 

\medskip

Thus the solution of the boundary value problem given in the introduction is:

$$
u(t) := \sum_{i \lambda^2 \in i\Sigma_p^+ (H)} u^+_0(\lambda, \cdot)  e^{i \lambda^2 t} \psi^+(\lambda, \cdot) + \sum_{i \lambda^2 \in 
i\Sigma_p^- (H)} u^-_0(\lambda, \cdot) e^{i \lambda^2 t} \psi^-(\lambda, \cdot).
$$    

The energy $E(t)= E^+(t) + E^-(t)$ with 

$$E^+(t):= 1/2 \, \sum_{i \lambda^2 \in i\Sigma_p^+(H)} \left \| u^+_0(\lambda, \cdot) \right \|_{\mathcal{H}}^2  |e^{i  \lambda^2 t}|^2, 
$$
$$
E^-(t):= 1/2 \, \sum_{i \lambda^2 \in i \Sigma_p^- (H)} \left \| u^-_0(\lambda, \cdot) \right \|_{\mathcal{H}}^2 |e^{i \lambda^2 t}|^2.
$$
Now, since $i \Sigma_p^+ (H)$ contains only purely imaginary eigenvalues, $|e^{2i \lambda^2 t}| = 1$, for any $\lambda$ such that $i \lambda^2 \in i \Sigma_p^+ (H)$ and any $t > 0$. Thus $E^+(t)=E^+(0)$ for any $t > 0$. 

\medskip

The real part of $i \lambda^2$ is a non-positive real number if $\lambda$ is such that $i \lambda^2 \in i\Sigma_p^- (H)$. This real part is proved to be equal to $- \frac{8\alpha}{3L}$ if $i \lambda^2 \in i\Sigma_p^- (H)$.   

\medskip

It holds $E^-(t) \leq e^{-2\omega t} E^-(0)$. Thus $E^-(t)$ decreases exponentially to $0$ when $t$ tends to $+ \infty$ and the total energy $E(t)$ decreases exponentially to $E^+(0)$ when $t$ tends to $+ \infty$. 
\end{proof}

\subsection{Energy decreasing: a numerical example}

The purely point spectrum of the conservative operator $iH$, with $\alpha =0$, is given by $i\Sigma_p^+ (H)$.

Then the set $i\Sigma_p^- (H)$ which is a part of the spectrum of the dissipative operator $iH$ has a vertical asymptote:  

$$
\Re(i\lambda^2)= - \frac{8\alpha}{3L},
$$

\begin{figure}[h]
\begin{center}
\includegraphics[width=12cm, height=8cm]{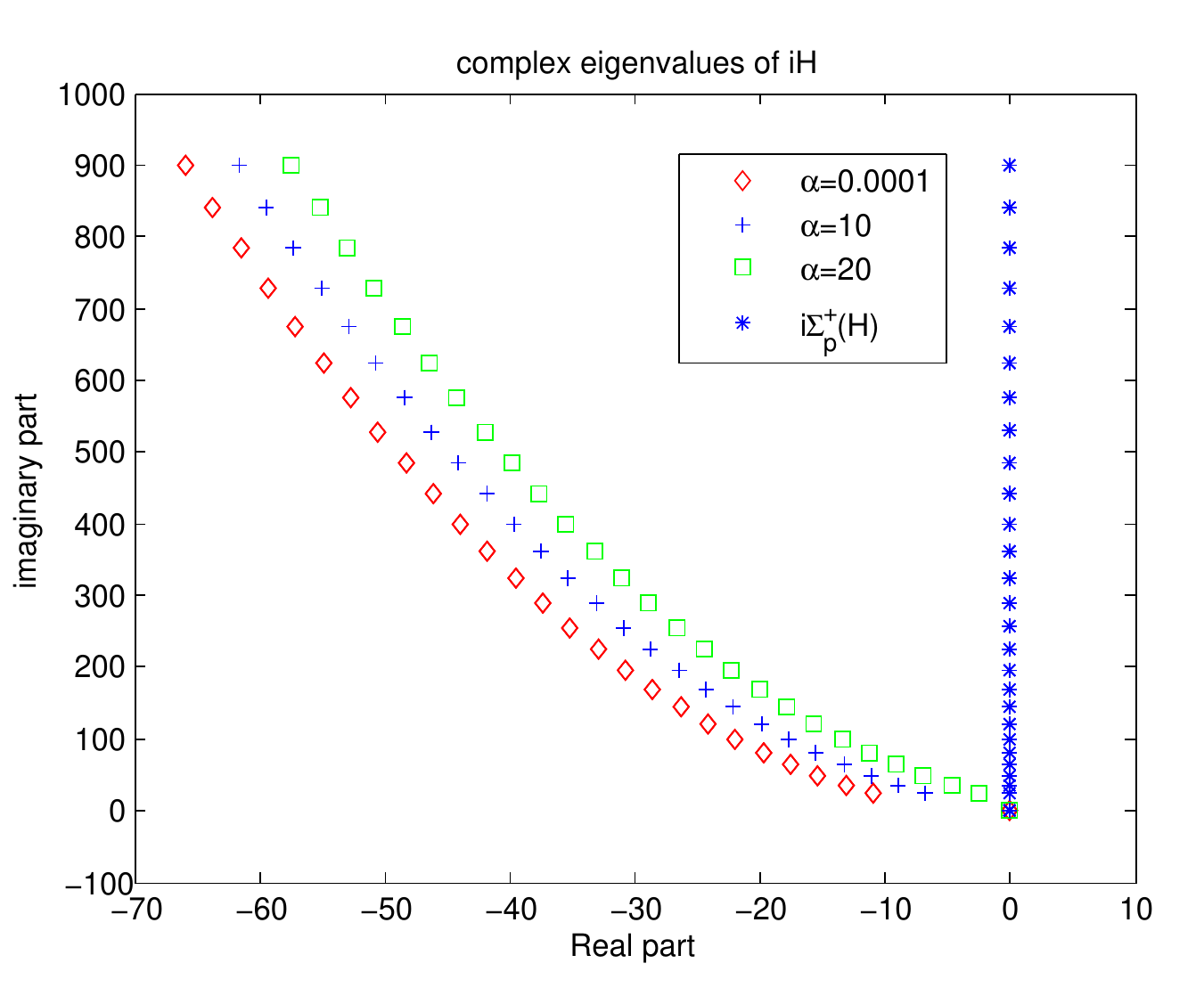}%
\caption{$\lambda_n(\alpha), $\; for $n=1,\dots, 30$ and $L=2\pi$}%
\label{fig2}%
\end{center}
\end{figure}

which is consistent with the numerical computation of the spectrum, see Figure \ref{fig2}.


\begin{thebibliography}{99}

\bibitem{amamnic} F. Ali Mehmeti, K. Ammari and S. Nicaise, Dispersive effects for the Schr\"odinger equation on the tadpole graph, {\em Journal of Mathematical Analysis and Applications,} {\bf 448} (2017), 262--280.

\bibitem{amregmer} K. Ammari, D. Mercier and V. R\'egnier, Spectral analysis of the Schr\"odinger operator on binary tree-shaped networks and applications, {\em J. Differential Equations,} {\bf 259} (2015), 6923--6959.

\bibitem{ammarinicaise} K. Ammari and S. Nicaise, {\em Stabilization of elastic systems by collocated feedback}, Lecture Notes in Mathematics, 2124, Springer, Cham, 2015.

\bibitem{cacc} C. Cacciapuoti, D. Finco and D. Noja, Topology-induced bifurcations for the nonlinear Schr\"odinger equation on the tadpole graph, {\em Phys. Rev.} E 91, 013206, 2015.

\bibitem{kato} T. Kato, {\em Perturbation theory for linear operators,} Classics in Mathematics, Springer Verlag, 1980.

\bibitem{kost} V. Kostrykin and R. Schrader, Kirchhoff's rule for quantum wires, {\em J. Phys. A: Math. Gen.,} {\bf 32}
(1999), 595--630.

\bibitem{nojaetall:15} D.  Noja,  D. Pelinovsky and G. Shaikhova,
Bifurcations and stability of standing waves in the nonlinear Schr\"odinger equation on the tadpole graph,
{\em  Nonlinearity,} {\bf 28} (2015), 2343--2378.

\end{thebibliography}
\end{document}